\newcommand{\la}{\langle}
\newcommand{\ra}{\rangle}
\newtheorem{theorem}{Theorem}
\newtheorem{lemma}[theorem]{Lemma}
\newtheorem{problem}[theorem]{Problem}
\newtheorem{remark}[theorem]{Remark}
\newcommand {\Sph} {\mathrm{Sph}}
\newcommand{\Cl}{\mathrm{Cl}}
\newcommand{\zz}{\mathbb{Z}[\frac{1}{2}]}
\theoremstyle{definition}
\long\def\@savemarbox#1#2{\global\setbox#1\vtop{\hsize\marginparwidth 
  \@parboxrestore\tiny\raggedright #2}}
\begin{document}
\author{Gili Golan Polak}
\address{Department of Mathematics, Ben Gurion University of the Negev, Be'er Sheva, Israel}
\email{golangi@bgu.ac.il}
\thanks{The author was partially supported by ISF grant 2322/19.}

\title{Random generation of Thompson group $F$}
\begin{abstract}
We prove that under two natural probabilistic models (studied by Cleary, Elder, Rechnitzer and Taback), the probability of a random pair of elements of Thompson group $F$ generating the entire group is positive. We also prove that for any $k$-generated subgroup $H$ of $F$ which contains a ``natural'' copy of $F$, the probability of a random  $(k+2)$-generated subgroup of $F$ coinciding with $H$ is positive. 
\end{abstract}

\maketitle

\section{Introduction}

The study of random generation of groups has a long history. In 1969, Dixon \cite{D} proved that the probability of $2$ random elements generating the alternating group $\mathrm{Alt}(n)$ tends to $1$ as $n$ goes to infinity. This result was later extended to any sequence of finite simple groups $G_n$ where the order $o(G_n)\rightarrow \infty$. 
Jaikin-Zapirain and Pyber \cite{JP} gave explicit bounds for the number of elements required to generate a finite group $G$ with high probability. As a result, they proved that if $G$ is a finite $d$-generated linear group of dimension
 $n$ then $cd + \log n$ random elements generate $G$ with high probability.

Random generation of finitely generated infinite groups was studied mostly in the profinite case by Kantor and Lubotzky \cite{KL}, Mann \cite{Ma}, Mann and Shalev \cite{MS1} and Jaikin-Zapirain and Pyber \cite{JP}, among others. In the profinite case, the generation is in the topological sense and the probability measure comes from the Haar measure on the group. A profinite group $G$ is \emph{positively finitely generated} or PFG for short, if for any large enough $k$, $k$ random elements of $G$ generate $G$ with positive probability \cite{Ma}. The minimal $k$ such that $k$ random elements of $G$ generate it with positive probability is denoted $d_p(G)$. Kantor and Lubotzky \cite {KL} proved that the free abelian profinite group $\widehat{\mathbb{Z}}^d$ is PFG with $d_p(\widehat{\mathbb{Z}}^d)=d+1$. They also proved that the free profinite group $\widehat{F_d}$ on $d>1$ generators is not PFG.  Mann \cite{Ma} defined positive finite generation of a discrete infinite group by passing to its profinite completion. Using this definition, Mann \cite{Ma} showed that $\mathrm{SL}(n,\mathbb{Z})$ is PFG for any $n\ge 3$ and that any finitely generated virtually solvable group is PFG.

In this paper, we study random generation of Thompson's group $F$. Since the profinite completion of $F$ is $\widehat{\mathbb{Z}}^2$ \cite{CFP}, according to Mann's definition, it is PFG with $d_p(F)=3$. Note however that  
passing to the profinite completion of $F$ 
does not let us distinguish between $F$ and its abelianization $\mathbb{Z}^2$. Moreover, elements of $F$ whose images in the profinite completion generate it, do not necessarily generate $F$ (for example, this is true for any pair of functions $f_1,f_2\in F$ such that $f_1,f_2$ have disjoint supports and such that the slopes $f_1'(0^+)=1, f_1'(1^-)=2$, $f_2'(0^+)=2, f_2'(1^-)=1$).

In this paper, we study random generation of Thompson's group $F$ using the probabilistic models for choosing random subgroups of $F$ introduced and studied in \cite{CERT}. Recall that in \cite{CERT},  Cleary, Elder, Rechnitzer and Taback 
study the likelihood of a random finitely generated subgroup of $F$ being isomorphic to a given subgroup.
The  choice of a random subgroup is done via a choice of its generating set.
To define the ``likelihood'' they use the  definition of asymptotic density, following
 Borovik, Miasnikov and Shpilrain \cite{BMS}:

Let $G$ be an infinite finitely generated group. Let $X$ be a set of representatives of all elements in $G$. That is, $X$ is a set of elements that maps onto $G$. Assume that there is a notion of \emph{size} for elements in $X$. For example, $X$ can be the set of all words over a finite set of generators of $G$ and a natural notion of size in that case is the length of a word in $X$. For all $k\in \mathbb{N}$ we let $X_k$ be the set of all unordered $k$-tuples of elements in $X$. One can associate a notion of size to a $k$-tuple in $X_k$ using the size-notion of elements in $X$. For example, one can define the size of an unordered $k$-tuple $\{x_1,\dots,x_k\}\footnote{In this paper, the notation $\{\cdot\}$ usually stands for a multiset.}$ as the sum of sizes of the elements $x_1,\dots,x_k$. Another option is to consider the maximal size of an element in the tuple. We will consider both of these options below.

Once a notion of size on $X_k$ is fixed, we let $\Sph_k(n)$ be the set of all $k$-tuples in $X_k$ of size $n$. The collection of spheres $\Sph_k(n)$ for all $n\in\mathbb{N}$, whose union covers $X_k$, is called a \emph{stratification} of $X_k$. 
The \emph{asymptotic density} of a subset $T\subseteq X_k$ is defined to be 
$$(*)\  \lim_{n\rightarrow\infty}\frac{|T\cap\Sph_k(n)|}{|\Sph_k(n)|}$$ 
if the limit exists. Regardless of the limit existing, if 
$$\liminf_{n\rightarrow\infty}\frac{|T\cap\Sph_k(n)|}{|\Sph_k(n)|}>0$$ we will say that $T$ has positive asymptotic density, or positive density for short. We would also say that the (asymptotic) probability of a random element of $X_k$ being in $T$ is positive in that case\footnote{Note that by replacing the  limit $(*)$ with the limit over some non-principal ultrfilter $\Omega$ over $\mathbb{N}$, one can always assume that the limit exists. Then one can refer to (finitely additive) probability here.}.

In \cite{CERT}, the authors consider two models for choosing a random $k$-generated subgroup of $F$. 
Recall that each element in $F$ is represented by a unique reduced tree-diagram $(T_+,T_-)$ which consists of two finite binary trees with the same number of carets (see Section \ref{sec:tree}). We let $X$ be the set of reduced tree-diagrams of elements in $F$ and let the size $|g|$ of an element $g=(T_+,T_-)\in X$  be the common number of carets in $T_+$ and $T_-$. The authors use the above two notions of size on $X_k$: in the \emph{sum model} the size of a $k$-tuple in $X_k$ is the sum of sizes of its components and in the \emph{max model} the size of an element in $X_k$ is the maximum size of its	components. 

In \cite{CERT}, a finitely generated subgroup $H$ of $F$ is said to be \emph{persistent} (in a given model) if for every $k$ large enough, the probability of a $k$-generated subgroup of $F$ being isomorphic to $H$ is positive (that is, if the set of $k$-tuples generating a subgroup isomorphic to $H$ has positive density in $X_k$). It is proved in \cite{CERT} that in the sum model, every non-trivial finitely generated subgroup of $F$ is persistent and in the max model, some non-trivial finitely generated subgroups of $F$ are persistent and some are not. 

A group $G$ is said to have a \emph{generic} type of subgroup for some $k\in \mathbb{N}$ if $G$ has a subgroup $H$ such that the asymptotic probability of a random $k$-generated subgroup of $G$ being isomorphic to $H$ is $1$. Note that in both of the above models, for every $k>1$, $F$ does not have a generic type of subgroup \cite{CERT}. In fact, Thompson group $F$ is the first (and so far only) group known to not have a generic  type of subgroup for any $k>1$. 
Jitsukawa \cite{J} proved
that $k$ elements of any finite rank non-abelian free group generically form a free basis for a free
group of rank $k$. Miasnikov and Ushakov \cite{MU} proved this is true also for pure braid groups and right angled Artin groups. Aoun \cite{A} proved the same for non virtually solvable finitely generated linear groups. Gilman, Miasnikov and Osin  proved it for hyperbolic groups and 
Taylor and Tiozo \cite{TT}  proved it for acylindrically hyperbolic groups (see also  Maher and  Sisto \cite{MS}).
 In particular, for all of these groups, a random finitely generated subgroup 
  is almost surely a proper subgroup.
The main result of this paper is the following theorem. 

\begin{theorem}\label{Thm1}
			In both the max-model and the sum-model considered above, the asymptotic probability of a random $k$-generated subgroup of $F$ being equal to $F$ is positive for all $k\ge 2$.
\end{theorem}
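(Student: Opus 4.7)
I would base the plan on the short exact sequence
\[
1 \longrightarrow [F,F] \longrightarrow F \xrightarrow{\pi} \mathbb{Z}^2 \longrightarrow 1,
\]
where $\pi(f)=(\log_2 f'(0^+),\log_2 f'(1^-))$ can be read directly off the extreme caret chains of a reduced tree diagram $(T_+,T_-)$. A $k$-tuple generates $F$ exactly when (A) the images $\pi(g_i)$ generate $\mathbb{Z}^2$ and (B) the subgroup $\langle g_1,\dots,g_k\rangle$ contains $[F,F]$. The goal is to exhibit a single positive-density event $E\subset X_k$, cut out by prescribing the initial and final carets of each $T_{\pm}^{(i)}$, such that every $k$-tuple in $E$ satisfies both (A) and (B). Positive density of $E$ in either model would then follow from standard Catalan asymptotics, since the unspecified carets may be filled in arbitrarily.

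Condition (A) is the easy part: the leftmost and rightmost caret chains of each $T_{\pm}^{(i)}$ completely determine $\pi(g_i)$, so a prefix that forces, say, $\pi(g_1)=(1,0)$ and $\pi(g_2)=(0,1)$ secures (A) for free. The real content is (B). My plan here is to engineer the prefix pattern so that some explicit short words $w_1,\dots,w_r$ in $g_1,\dots,g_k$ realize, \emph{regardless of the tails}, the standard generators of a natural copy $F_{[a,b]}\le F$ acting on a fixed dyadic subinterval $[a,b]\subset(0,1)$. Once $F_{[a,b]}\subseteq\langle g_1,\dots,g_k\rangle$ is in place, conjugation by the elements produced in (A) transports $F_{[a,b]}$ onto every dyadic subinterval of $(0,1)$, and the subgroup generated by all these copies $F_{[a',b']}$ is precisely $[F,F]$. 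This is the same ``natural copy'' principle underlying the companion result of the abstract.

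The principal obstacle is the construction of the prefix enforcing (B): one needs a finite list of word identities whose fulfillment on just the prescribed carets certifies $F_{[a,b]}\subseteq\langle g_1,\dots,g_k\rangle$ uniformly over the unrestricted tail structure of each tree. The delicate case is $k=2$, where the two generators must simultaneously realize the abelianization condition (A) and plant a natural copy $F_{[a,b]}$; there is no spare generator to compartmentalize the two tasks. For $k\ge 3$, one can hope to dedicate $g_1,g_2$ to (A) and use $g_3$ (plus possibly a controlled prefix on $g_1,g_2$) to install $F_{[a,b]}$, yielding a cleaner argument. I expect the bulk of the proof to lie in designing the two-generator prefix for the $k=2$ case, verifying that the prescribed caret configuration genuinely forces both the slope data and the planted natural copy, and finally carrying out the density bookkeeping separately in the sum and max models.
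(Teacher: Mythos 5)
Your overall architecture matches the paper's: an injective ``planting'' map that prescribes a fixed portion of each tree-diagram while leaving an arbitrary subtree free, injectivity plus a constant size shift giving positive density in both models (this is exactly the paper's $\Phi$ built from $\phi_1(g)=x\cdot g_{[00110]}$, $\phi_2(g)=y\cdot g_{[11]}$ and its Lemma \ref{pos_den}), and the splitting of generation into an abelianization condition plus a condition guaranteeing the commutator subgroup. Your condition (A) is handled the same way the paper handles it.

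The gap is in (B). You propose to choose prefixes so that \emph{explicit words} $w_1,\dots,w_r$ in the $g_i$ equal the standard generators of a natural copy $F_{[a,b]}$ ``regardless of the tails,'' and then to sweep $F_{[a,b]}$ over all dyadic subintervals by conjugation. Neither step is justified, and both are problematic. First, a fixed word in elements carrying arbitrary unreduced tails will in general depend on those tails; prescribing initial and final carets (or even a full prefix tree) does not let you cancel the tail contributions, so it is not clear that any nontrivial tail-independent element of $\langle g_1,\dots,g_k\rangle$ can be produced this way, let alone two generators of a copy of $F$. Second, even granting $F_{[a,b]}\le H$, transporting it onto \emph{every} dyadic subinterval requires elements of $H$ moving $a$ arbitrarily close to $0$ and $b$ arbitrarily close to $1$ simultaneously; the slope data from (A) only controls germs at the endpoints, and the unconstrained middle portions of the trees could, a priori, fix $[a,b]$, so the needed transitivity does not follow from (A). The paper avoids both difficulties by invoking Theorem \ref{gen1} (from \cite{G20}): $H=F$ if and only if $\Cl(H)=F$ and $H[F,F]=F$. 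Via Lemma \ref{suffice}, the closure condition reduces to the presence of five specific pairs of branches, which \emph{is} a tail-independent, purely local property of the prescribed portions of $\phi_1(h_1)$ and $\phi_2(h_2)$. That closure criterion (resting on the solution of the generation problem in \cite{G16}) is the essential input your plan is missing; without it, or a worked-out substitute for the two steps above, the argument does not close, and the $k=2$ case you flag as ``the bulk of the proof'' remains entirely open.
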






By Theorem \ref{Thm1}, a random pair of elements of $F$ generate $F$ with positive asymptotic probability. Other groups where this property holds, with respect to some natural probabilistic model, include $\mathbb{Z}$ (where the probability that two random integers generate $\mathbb{Z}$ is $\frac{6}{\pi^2}$) \cite{AN} and Tarski monsters constructed by Ol'shanskii \cite{O}.  Recall that Tarski monsters are infinite finitely generated non-cyclic groups where every proper subgroup is cyclic\footnote{There are two types of Tarski monsters. One where every proper subgroup is infinite cyclic and one where every proper subgroup is cyclic of order $p$ for some fixed prime $p$.}. In particular, if $T$ is a $2$-generated Tarski monster, then $T$ is generated by any pair of non-commuting elements of $T$. Hence, a random pair of elements of $T$ almost surely generates $T$. Thompson group $F$ is the first example of a finitely presented non virtually-cyclic group where a random pair of elements generate the group with positive asymptotic probability.

We note that the above results for free groups, braid groups, right angled Artin groups and hyperbolic groups were derived in the setting of asymptotic density as described above, where the set $X$ of representatives of elements in the group was taken to be the the set of all finite words over some finite generating set. One can also view the $k$-tuples in this setting as arising from $k$ nearest-neighbor random walks on a Cayley graph of the group.
The results for linear groups and acylindrically hyperbolic groups
 were derived for more general random walks.
  
 Let $S$ be a $2$-generating set of Thompson group $F$. A simple random walk on the Cayley graph of $F$ with respect to $S$ projects onto a simple random walk on  $\mathbb{Z}^2$, the abelianization of $F$.
 It is easy to check that the probability that $2$ independent simple random walks on $\mathbb{Z}^2$ generate  $\mathbb{Z}^2$ is trivial. (Indeed, the main idea is as follows. Given any vector $u\in\mathbb{Z}^2$, 
 if it forms part of a $2$-generating set of $\mathbb{Z}^2$ then there is a vector $v\in\mathbb{Z}^2$ such that the square matrix formed by the vectors $u,v$ has determinant $1$ and such that for any $w\in\mathbb{Z}^2$ the set $\{u,w\}$ generates $\mathbb{Z}^2$ if and only if $w\in\{\pm v+ku:k\in\mathbb{Z}\}$. Using the formula  from \cite[Theorem 1]{G} for enumerating paths on $\mathbb{Z}^2$, one can show that if $S_n$ is a simple random walk on $\mathbb{Z}^2$ then for any pair of vectors $u,v\in\mathbb{Z}^2$, the probability $\mathbb{P}[S_n\in\{\pm v+ku:k\in\mathbb{Z}\}]\le \frac{10}{\sqrt{n}}$ which tends to $0$ as $n\to \infty$).
 Hence, the probability that two independent random walks on the Cayley graph $\mathrm{Cay}(F,S)$ generate Thompson's group $F$ is trivial. We note that two independent simple random walks on $\mathbb{Z}^2$ generate a finite index subgroup of $\mathbb{Z}^2$  with asymptotic probability $1$. Similarly, $3$ independent simple random walks on $\mathbb{Z}^2$ generate it with positive probabilty.  The following problem remains open.

\begin{problem}
	Is it true that any two independent simple random walks on 
	Thompson group $F$  generate a finite index subgroup of $F$ with positive probability? Is there some integer $k>2$ such that $k$ independent random walks on Thompson's group $F$ generate $F$ with positive probability?
\end{problem}


Theorem \ref{Thm1} can be viewed as a stronger version of Theorem 24 of \cite{CERT} which claims that Thompson group $F$ is a persistent subgroup of itself. Let $H\le F$ be a finitely generated subgroup. We say that $H$ is a \emph{perpetual} subgroup of $F$ (in a given model) if for any large enough $k$ the probability of a $k$-generated subgroup of $F$ coinciding with $H$ is positive.
Theorem \ref{Thm1} claims that Thompson group $F$ is a perpetual subgroup of itself. In Section \ref{sec:per} below we generalize this statement: we prove (Theorem \ref{nat}) that any finitely generated subgroup of $F$ which contains a \emph{natural copy} of $F$ (see Section \ref{natural}) is a perpetual subgroup of $F$. 

\vskip .3cm
\textbf{Acknowledgments.} The author would like to thank Mark Sapir for helpful conversations.

\section{Preliminaries on Thompson group $F$}\label{s:FT}

\subsection{F as a group of homeomorphisms}

Recall that Thompson group $F$ is the group of all piecewise linear homeomorphisms of the interval $[0,1]$ with finitely many breakpoints where all breakpoints are finite dyadic and all slopes are integer powers of $2$.  
The group $F$ is generated by two functions $x_0$ and $x_1$ defined as follows \cite{CFP}.
	
	\[
   x_0(t) =
  \begin{cases}
   2t &  \hbox{ if }  0\le t\le \frac{1}{4} \\
   t+\frac14       & \hbox{ if } \frac14\le t\le \frac12 \\
   \frac{t}{2}+\frac12       & \hbox{ if } \frac12\le t\le 1
  \end{cases} 	\qquad	
   x_1(t) =
  \begin{cases}
   t &  \hbox{ if } 0\le t\le \frac12 \\
   2t-\frac12       & \hbox{ if } \frac12\le t\le \frac{5}{8} \\
   t+\frac18       & \hbox{ if } \frac{5}{8}\le t\le \frac34 \\
   \frac{t}{2}+\frac12       & \hbox{ if } \frac34\le t\le 1 	
  \end{cases}
\]

The composition in $F$ is from left to right.

Every element of $F$ is completely determined by how it acts on the set $\zz$. Every number in $(0,1)$ can be described as $.s$ where $s$ is an infinite word in $\{0,1\}$. For each element $g\in F$ there exists a finite collection of pairs of (finite) words $(u_i,v_i)$ in the alphabet $\{0,1\}$ such that every infinite word in $\{0,1\}$ starts with exactly one of the $u_i$'s. The action of $F$ on a number $.s$ is the following: if $s$ starts with $u_i$, we replace $u_i$ by $v_i$. For example, $x_0$ and $x_1$  are the following functions:

\[
   x_0(t) =
  \begin{cases}
   .0\alpha &  \hbox{ if }  t=.00\alpha \\
    .10\alpha       & \hbox{ if } t=.01\alpha\\
   .11\alpha       & \hbox{ if } t=.1\alpha\
  \end{cases} 	\qquad	
   x_1(t) =
  \begin{cases}
   .0\alpha &  \hbox{ if } t=.0\alpha\\
   .10\alpha  &   \hbox{ if } t=.100\alpha\\
   .110\alpha            &  \hbox{ if } t=.101\alpha\\
   .111\alpha                      & \hbox{ if } t=.11\alpha\
  \end{cases}
\]
where $\alpha$ is any infinite binary word.

The group $F$ has the following finite presentation \cite{CFP}.
$$F=\la x_0,x_1\mid [x_0x_1^{-1},x_1^{x_0}]=1,[x_0x_1^{-1},x_1^{x_0^2}]=1\ra,$$ where $a^b$ denotes $b^{-1} ab$. Sometimes, it is more convenient to consider an infinite presentation of $F$. For $i\ge 1$, let $x_{i+1}=x_0^{-i}x_1x_0^i$. In these generators, the group $F$ has the following presentation \cite{CFP}
$$\la x_i, i\ge 0\mid x_i^{x_j}=x_{i+1} \hbox{ for every}\ j<i\ra.$$

\subsection{Elements of F as pairs of binary trees} \label{sec:tree}

Often, it is more convenient to describe elements of $F$ using pairs of 
 (full) finite binary trees $(T_+,T_-)$ which have the same number of leaves. Such a pair is also called a \emph{tree-diagram}.

If $T$ is a finite binary tree, a \emph{branch} in $T$ is a simple path from the root to a leaf. Every non-leaf vertex of $T$ has two outgoing edges: a left edge and a right edge.
If every left edge of $T$ is labeled by $0$ and every right edge is labeled by $1$, then every branch of $T$ is labeled by a finite binary word $u$.  We will usually not distinguish between a branch and its label.

Let $(T_+,T_-)$ be a tree-diagram where $T_+$ and $T_-$ have $n$ leaves. Let $u_1,\dots,u_n$ (resp. $v_1,\dots,v_n$) be the branches of $T_+$ (resp. $T_-$), ordered from left to right.
For each $i=1,\dots,n$ we say that the tree-diagram $(T_+,T_-)$ has the \emph{pair of branches} $u_i\rightarrow v_i$. The function $g$ from $F$ corresponding to this tree-diagram takes binary fraction $.u_i\alpha$ to $.v_i\alpha$ for every $i$ and every infinite binary word $\alpha$. 

The tree-diagrams of the generators of $F$, $x_0$ and $x_1$, appear in Figure \ref{fig:x0x1}.

\begin{figure}[ht]
	\centering
	\begin{subfigure}{.5\textwidth}
		\centering
		\includegraphics[width=.5\linewidth]{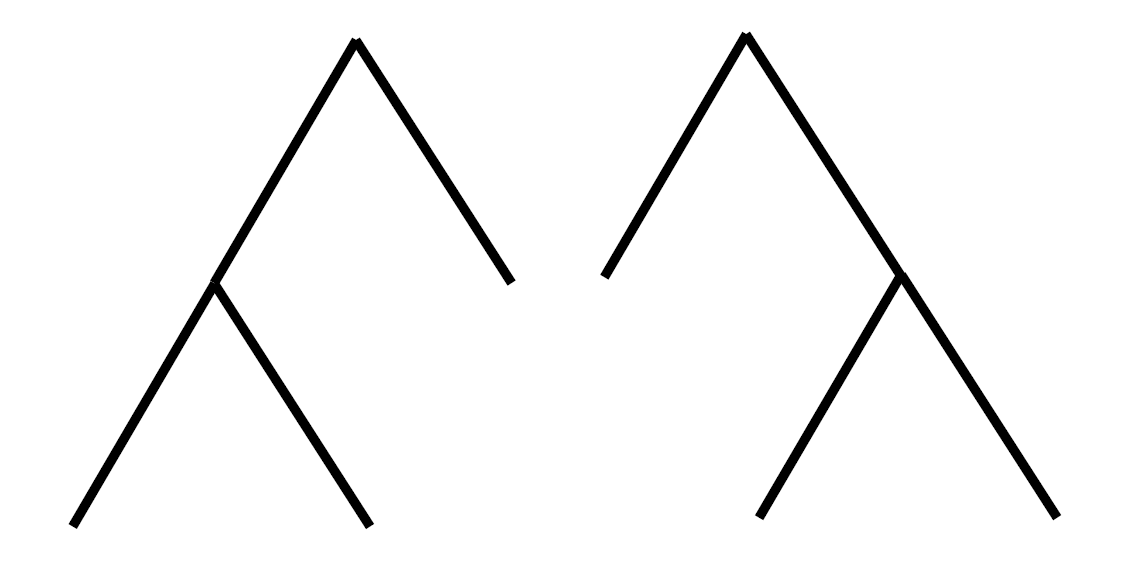}
		\caption{}
		\label{fig:x0}
	\end{subfigure}%
	\begin{subfigure}{.5\textwidth}
		\centering
		\includegraphics[width=.5\linewidth]{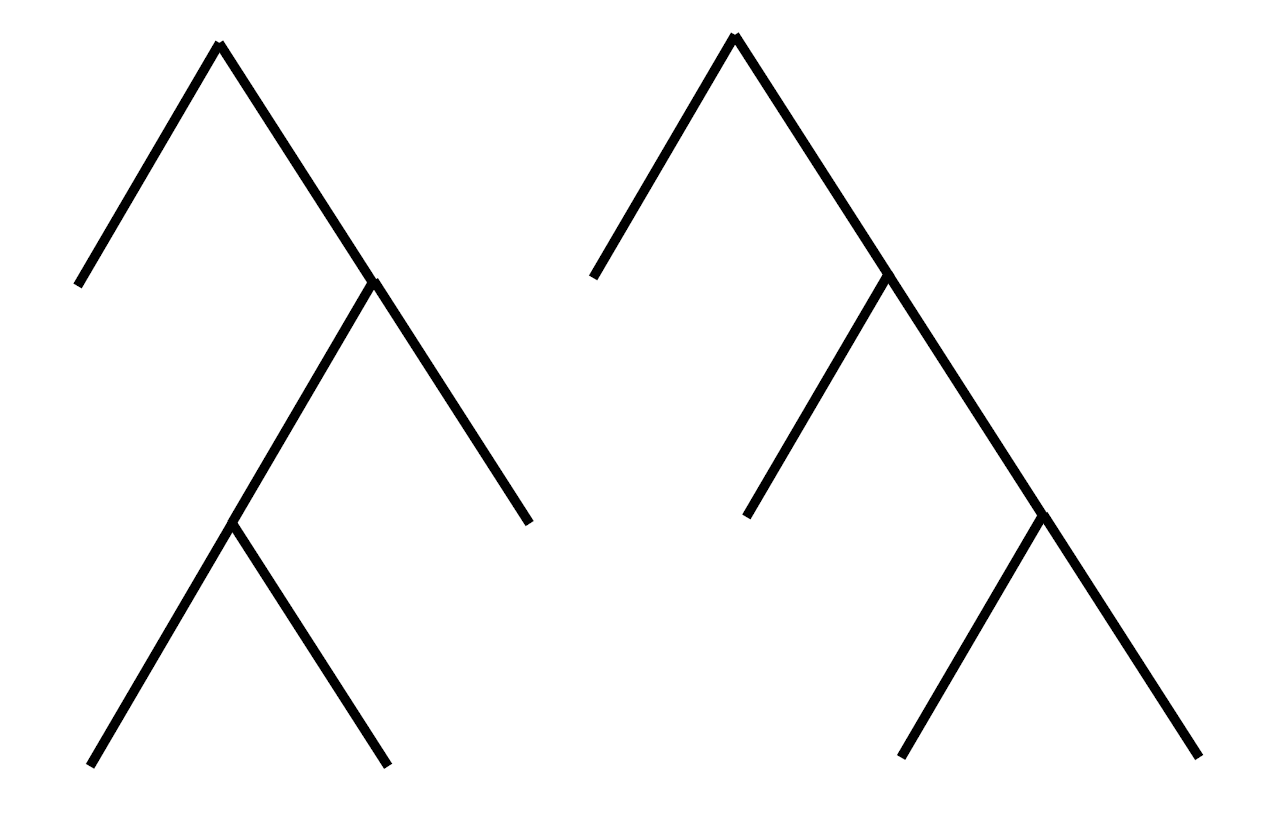}
		\caption{}
		\label{fig:x1}
	\end{subfigure}
	\caption{(A) The tree-diagram of $x_0$. (B) The tree-diagram of $x_1$. In both figures, $T_+$ is on the left and $T_-$ is on the right.}
	\label{fig:x0x1}
\end{figure}

A \emph{caret} is a binary tree composed of a root with two children. If $(T_+,T_-)$ is a tree-diagram and one attaches a caret to the $i^{th}$ leaf of $T_+$ and the $i^{th}$ leaf of $T_-$ then the resulting tree diagram is \emph{equivalent} to $(T_+,T_-)$ and represents the same function in $F$. The opposite operation is that of \emph{reducing} common carets. A tree diagram $(T_+,T_-)$ is called \emph{reduced} if it has no common carets; i.e, if there is no $i$ for which the  $i$ and ${i+1}$ leaves of both $T_+$ and $T_-$ have a common father. 
Every tree-diagram is equivalent to a unique reduced tree-diagram. Thus elements of $F$ can be represented uniquely by reduced tree-diagrams \cite{CFP}. Given an element $g\in F$ we let the \emph{size} of $g$, denoted $|g|$, be the number of carets in the reduced tree-diagram of $g$. 

A slightly different way of describing the function in $F$ corresponding to a given tree-diagram is the following. 
For each finite binary word $u$, we let the \emph{interval associated with $u$}, denoted by $[u]$, be the interval $[.u,.u1^\mathbb N]$. If $(T_+,T_-)$ is a tree-diagram for $f\in F$, we let $u_1,\dots,u_n$ be the branches of $T_+$ and $v_1,\dots,v_n$ be the branches of $T_-$. Then the intervals $[u_1],\dots,[u_n]$ (resp. $[v_1],\dots,[v_n]$) form a subdivision of the interval $[0,1]$. The function $f$ maps each interval $[u_i]$ linearly onto the interval $[v_i]$.

Below, when we say that a function $f$ has a pair of branches $u\rightarrow v$, the meaning is that some tree-diagram representing $f$ has this pair of branches. In other words, this is equivalent to saying that $f$ maps $[u]$ linearly onto $[v]$. In particular, if $f$ has the pair of branches $u\rightarrow v$ then for any finite binary word $w$, $f$ has the pair of branches $uw\rightarrow vw$.

%
The operations in $F$ can be described in terms of operations on tree-diagrams as follows. 

\begin{remark}[See \cite{CFP}]\label{r:000}
	The tree-diagram where both trees are just singletons plays the role of identity in $F$. Given a tree-diagram $(T_+^1,T_-^1)$, the inverse tree-diagram is $(T_-^1,T_+^1)$. If $(T_+^2,T_-^2)$ is another tree-diagram then the product of $(T_+^1,T_-^1)$ and $(T_+^2,T_-^2)$ is defined as follows. There is a minimal finite binary tree $S$ such that $T_-^1$ and $T_+^2$ are rooted subtrees of $S$. 
	Clearly, $(T_+^1,T_-^1)$ is equivalent to a tree-diagram $(T_+,S)$ for some finite binary tree $T_+$. Similarly, $(T_+^2,T_-^2)$ is equivalent to a tree-diagram $(S,T_-)$. The \emph{product}  $(T_+^1,T_-^1)\cdot(T_+^2,T_-^2)$ is (the reduced tree-diagram equivalent to) $(T_+,T_-)$.
\end{remark}

\subsection{The max and sum stratifications}

Let $X$ be the set of all reduced tree-diagrams ($X$ can naturally be identified with $F$). For each $k$, let $X_k$ be the set of all unordered $k$-tuples of elements in $X$.
  
Recall the two stratifications of $X_k$ mentioned above. The \emph{sum stratification} is the stratification of $X_k$ using spheres $\Sph_k^{sum}(n)$ of increasing radii where the size of an unordered $k$-tuple $\{h_1,\dots,h_k\}$ in $X_k$, denoted $||\{h_1,\dots,h_k\}||_{\mathrm{sum}}$, is defined to be the sum of sizes $|h_i|$, for $i=1,\dots,k$. The \emph{max stratification} is defined in a similar way, where the size $||\{h_1,\dots,h_k\}||_{\mathrm{max}}$ of an unordered $k$-tuple in $X_k$ is taken to be the maximum size of any of its components. Let $r_n$ for $n\in\mathbb{N}$ be the number of reduced tree-diagrams in $X$ of size $n$. 
The following is proved in \cite{CERT}.

\begin{lemma}\label{rn}
	The following assertions hold. 
	\begin{enumerate}
		\item[(1)] \cite[Lemma 6]{CERT} For any $k\in\mathbb{Z}$, $$\lim_{n\rightarrow\infty}\frac{r_{n-k}}{r_n}=\mu^{-k},$$ where  $\mu=8+4\sqrt{3}$. 
		\item[(2)] \cite[Lemma 10]{CERT} For $k\ge 1$ and $n\ge k$, the size of the sphere of radius $n$ in $X_k$ with respect to the sum stratification satisfies the following bounds:
		$$r_{n-k+1}\le |\Sph_k^{sum}(n)|\le r_{n+k-1}.$$
		\item[(3)] \cite[Lemma 13]{CERT} For $k\ge 1$ and $n\ge k$, the size of the sphere of radius $n$ in $X_k$ with respect to the max stratification satisfies the following bounds:
		$$\frac{1}{k!}(r_n)^k\le |\Sph_k^{max}(n)|\le k(r_n)^k.$$
	\end{enumerate}
\end{lemma}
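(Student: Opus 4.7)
The plan for part (1) is to derive the exponential growth rate of $r_n$ via generating functions. Let $B(x)=\sum_n C_n x^n$ be the Catalan generating function counting binary trees by number of carets (so $B=1+xB^2$), and let $R(y)=\sum_m r_m y^m$. The key combinatorial observation is that every (possibly unreduced) tree-pair of size $n$ is obtained uniquely from a reduced tree-pair of some size $m\le n$ by attaching, at each of the $m+1$ matching leaves, the same binary subtree to both trees of the pair. Since $C_n^2$ counts all tree-pairs of size $n$, this decomposition yields the identity $F(x):=\sum_n C_n^2 x^n = B(x)\,R(xB(x))$. Using the substitution $y=xB(x)$, which (via $B(1-xB)=1$) is equivalent to $x=y(1-y)$, one can solve for $R(y)=(1-y)F(y(1-y))$. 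Since $C_n\sim 4^n/(n^{3/2}\sqrt{\pi})$, the series $F$ has dominant singularity at $x=1/16$; the equation $y(1-y)=1/16$ has smallest positive root $y_*=(2-\sqrt{3})/4$, and $1/y_*=8+4\sqrt{3}=\mu$. Standard singularity analysis then yields $r_n\sim c\mu^n n^{-\alpha}$ for appropriate constants, and hence $r_{n-k}/r_n\to\mu^{-k}$.

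For the sum-model bounds in (2), the lower bound $r_{n-k+1}\le|\Sph_k^{sum}(n)|$ is obtained by fixing one size-$1$ reduced tree-diagram (for example $x_0$) and noting that the multisets consisting of $k-1$ copies of $x_0$ together with an arbitrary reduced tree-diagram of size $n-k+1$ are pairwise distinct and all lie in $\Sph_k^{sum}(n)$. For the upper bound, I would construct an explicit injection from $\Sph_k^{sum}(n)$ into the set of reduced tree-diagrams of size $n+k-1$: given a multiset $\{h_1,\ldots,h_k\}$, order its elements canonically (say, lexicographically on their reduced tree-diagrams), and build a single tree-pair whose positive and negative trees each start with a fixed left-comb ``spine'' of $k-1$ extra carets, with the $h_i$'s grafted in the chosen order at the $k$ leaves of the spine. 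The result has exactly $n+k-1$ carets, is reduced (the grafted pieces are separated by the spine from one another and no caret of the spine is a common caret in $T_+$ and $T_-$), and uniquely determines the original multiset, establishing the injection.

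Part (3) is the most elementary. For the lower bound, the $(r_n)^k$ ordered $k$-tuples of reduced tree-diagrams each of size exactly $n$ project to at least $(r_n)^k/k!$ distinct multisets in $\Sph_k^{max}(n)$, since their maximum component size is $n$. For the upper bound, each element of $\Sph_k^{max}(n)$ has at least one component of size exactly $n$, so distinguishing one such component gives an at-most-$k$-to-one map from ordered tuples $(h_1,\ldots,h_k)$ with $|h_1|=n$ and $|h_i|\le n$ for $i\ge 2$ onto $\Sph_k^{max}(n)$. The number of such ordered tuples is $r_n\cdot(\sum_{m\le n}r_m)^{k-1}$, and by part (1) the partial sum $\sum_{m\le n}r_m$ is bounded by a constant multiple of $r_n$ (because $r_m/r_n$ decays geometrically in $n-m$), yielding the claimed $O((r_n)^k)$ bound.

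The main technical obstacle is part (1): correctly setting up the reduction/inflation bijection between arbitrary tree-pairs and ``decorated'' reduced pairs, and extracting the ratio limit $\mu^{-k}$ from the resulting algebraic functional equation between $R$ and the Catalan series. Once the growth constant $\mu=8+4\sqrt{3}$ is pinned down, parts (2) and (3) reduce to elementary combinatorial counting.
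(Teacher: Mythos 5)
The paper offers no proof of this lemma: all three parts are imported verbatim from \cite{CERT} (Lemmas 6, 10 and 13 there), so there is no in-paper argument to compare against and you are in effect reconstructing the cited results. Your part (1) is the standard and correct derivation. The decomposition of an arbitrary tree-pair into its reduced form with a common subtree grafted at each matching leaf is valid (it is exactly the statement that the diagrams representing a fixed element of $F$ are obtained from the unique reduced one by attaching common carets), the functional equation $\sum_n C_n^2x^n=B(x)R(xB(x))$ and its inversion $R(y)=(1-y)F(y(1-y))$ check out (using $B=1/(1-y)$ when $x=y(1-y)$), and the smallest root of $y(1-y)=1/16$ is indeed $y_*=(2-\sqrt3)/4$ with $1/y_*=8+4\sqrt3$. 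The remaining work (Pringsheim, analytic continuation of $F$ to a $\Delta$-domain, transfer) is genuinely standard, and the ratio limit follows from any asymptotic of the form $c\mu^n n^{-\alpha}$.

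Parts (2) and (3) contain concrete errors. First, there are no reduced tree-diagrams of size $1$: the pair of single carets reduces to the trivial diagram, so $r_1=0$ (and $x_0$ has $2$ carets, not $1$, as the paper itself uses when computing $|\bar y|=2+|p|$). Hence your lower-bound construction for (2), padding with $k-1$ copies of a size-$1$ element, produces nothing; you need a different padding (e.g.\ identities plus a size-monotonicity argument, or a fixed injective map $X\to X$ adding $k-1$ carets). Second, in your upper-bound spine injection the asserted reducedness can fail: if two components grafted at sibling leaves of the comb are both trivial, that spine caret is a common caret of the resulting pair, so the image need not lie in the set of reduced diagrams of size $n+k-1$; this degenerate case must be handled. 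Third, in (3) the step ``$\sum_{m\le n}r_m=O(r_n)$'' does not follow from the ratio limit of part (1) alone (a pointwise limit $r_{n-j}/r_n\to\mu^{-j}$ for fixed $j$ gives no uniform geometric decay over all $j\le n$); it does follow from the full asymptotic $r_n\sim c\mu^nn^{-\alpha}$, but then you obtain a constant of the form $C^{k-1}$ rather than the stated $k$, so you prove only $|\Sph_k^{max}(n)|=O((r_n)^k)$. That weaker form is all the paper actually uses in Lemma \ref{pos_den}, but it is not the statement as quoted.
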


\subsection{Natural copies of $F$}\label{natural}

Thompson group $F$ contains many copies of itself (see \cite{Brin}). The copies of $F$ we will be interested in will be of the following simple form. Let $v$ be a finite binary word. We denote by $F_{[v]}$ the subgroup of $F$ of all functions supported on the interval $[v]$; i.e., the subgroup of $F$ of all functions which fix the complement of $[v]$ in $[0,1]$ pointwise.  
Note that $F_{[v]}$ is isomorphic to $F$.  
Indeed, one can define an isomorphism between $F$ and $F_{[v]}$ using tree-diagrams as follows. Let $g$ be an element of $F$ represented by a reduced tree-diagram $(T_+,T_-)$. We map $g$ to an element in $F_{[v]}$, denoted by $g_{[v]}$ and referred to as the \emph{$[v]$-copy of $g$}.
To construct the element $g_{[v]}$ we start with a minimal finite binary tree $T$ which has the branch $v$. Note that the number of carets in $T$ is equal to $|v|$ (i.e., to the length of the word $v$). We take two copies of the tree $T$. To the first copy, we attach the tree $T_+$ at the end of the branch $v$. To the second copy we attach the tree $T_-$ at the end of the branch $v$. The resulting trees are denoted by $R_+$ and $R_-$, respectively. The element $g_{[v]}$ is the one represented by the tree-diagram $(R_+,R_-)$. 
(Note that this tree-diagram is necessarily reduced, so that $|g_{[v]}|=|g|+|v|$.)
 The mapping $g\rightarrow g_{[v]}$ is an isomorphism from $F$ to $F_{[v]}$.
For example, the $[0]$-copies of the generators $x_0,x_1$ of $F$ are depicted in Figure \ref{fig:0x0x1}. It is obvious that these copies generate the subgroup $F_{[0]}$. We call a subgroup $F_{[v]}$ of $F$, a \emph{natural copy} of $F$. 
\begin{figure}[ht]
	\centering
	\begin{subfigure}{.5\textwidth}
		\centering
		\includegraphics[width=.5\linewidth]{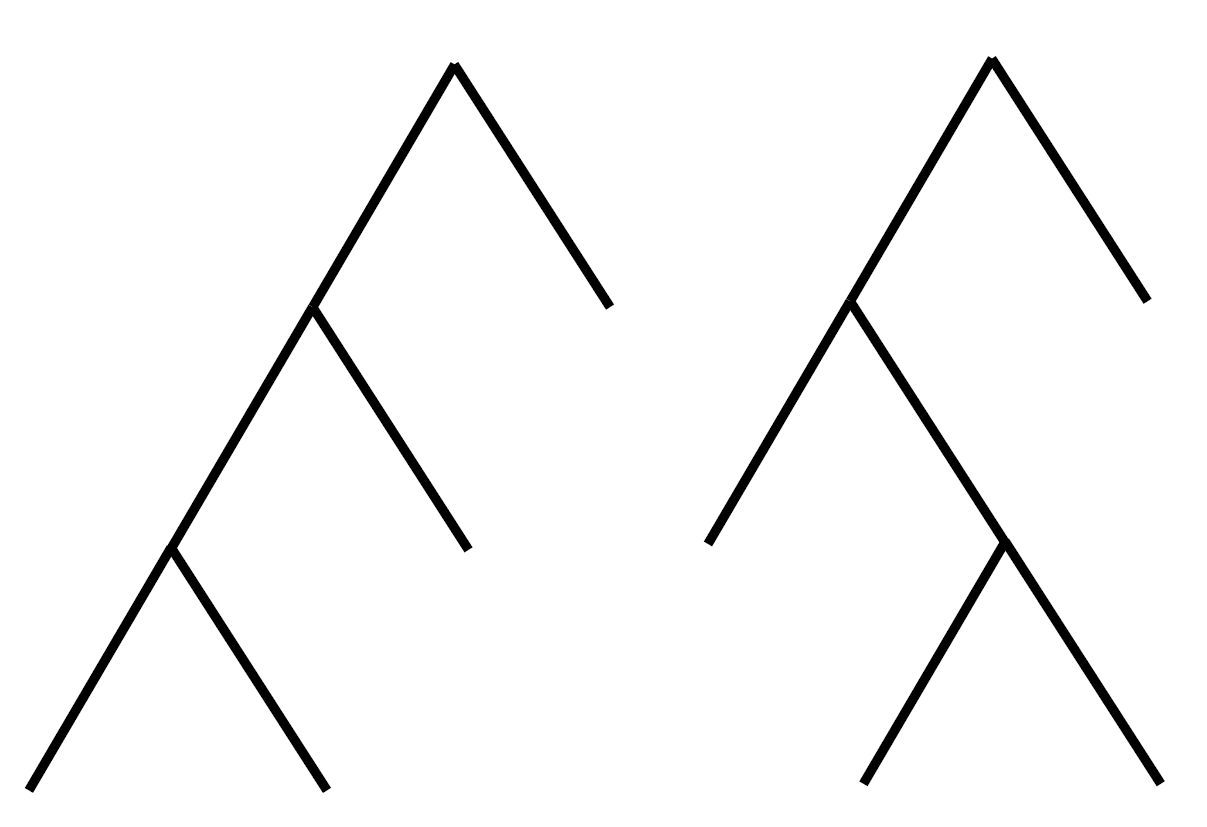}
		\caption{The tree-diagram of $(x_0)_{[0]}$}
		\label{fig:0x0}
	\end{subfigure}%
	\begin{subfigure}{.5\textwidth}
		\centering
		\includegraphics[width=.5\linewidth]{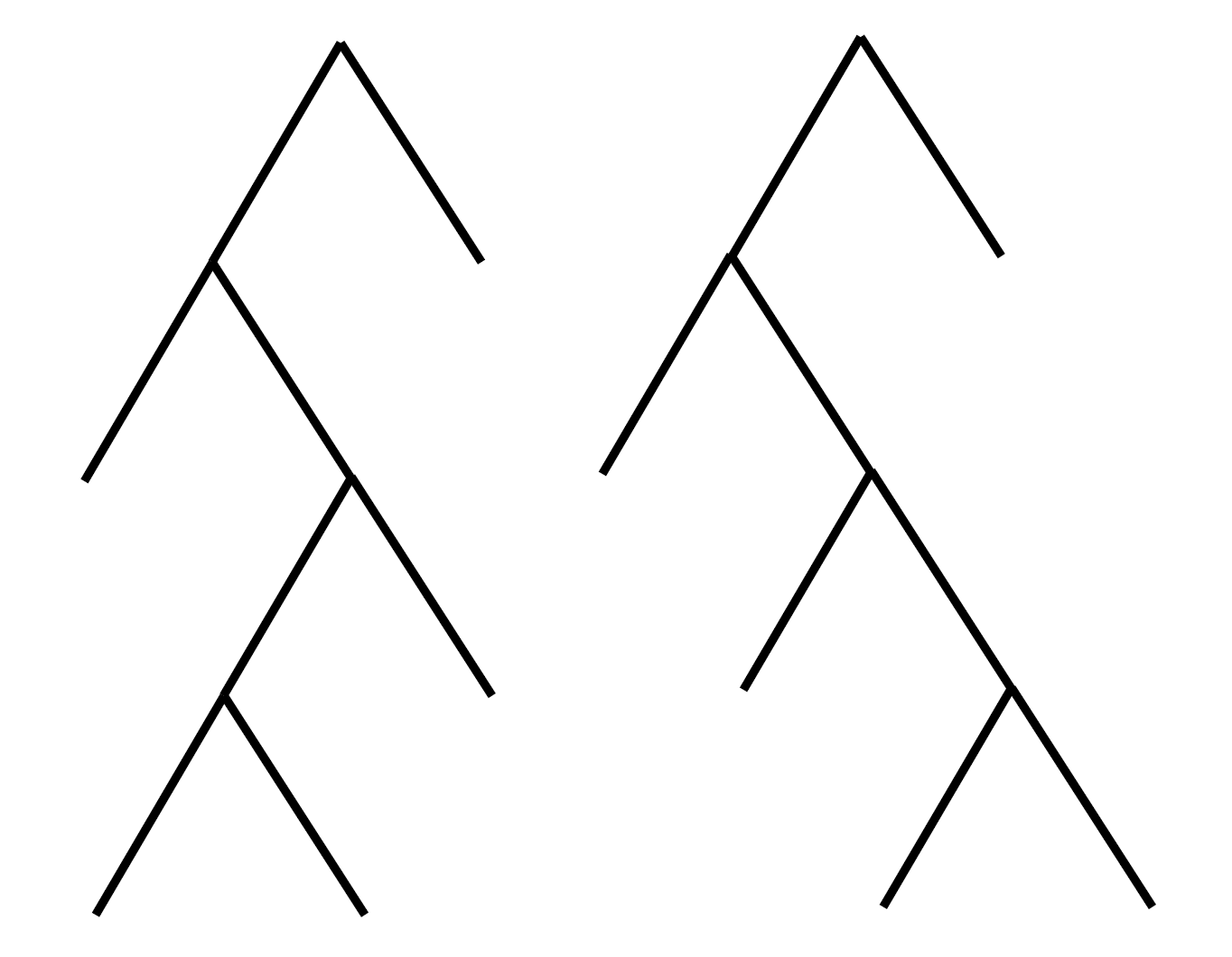}
		\caption{The tree-diagram of $(x_1)_{[0]}$}
		\label{fig:0x1}
	\end{subfigure}
	\caption{}
	\label{fig:0x0x1}
\end{figure}

Below we will multiply elements of $F$ by $[v]$-copies of elements of $F$. We will use the following observation which follows from Remark \ref{r:000} (see also, \cite[Lemma 2.6]{GS18}).

\begin{remark}\label{rem}
	Let $f,g\in F$ be elements with reduced tree-diagrams $(T_+,T_-)$ and $(S_+,S_-)$, respectively. Assume that $(T_+,T_-)$ has the pair of branches $u\rightarrow v$. Then the reduced tree-diagram of the product $f\cdot g_{[v]}$ is the tree-diagram obtained from $(T_+,T_-)$ by attaching the tree $S_+$ to the end of the branch $u$ of $T_+$ and the tree $S_-$ to the end of the branch $v$ of $T_-$. In particular, $|f\cdot g_{[v]}|=|f|+|g|$ and every pair of branches of $(T_+,T_-)$, other than $u\rightarrow v$, is also a pair of branches of $f\cdot g_{[v]}$. 
\end{remark}

%



\subsection{Generating sets of F}

Let $H\le F$. Following \cite{GS,G16}, we define the \emph{closure} of $H$, denoted $\Cl(H)$, to be the subgroup of $F$ of all piecewise-$H$ functions. In \cite{G16}, the author proved that the generation problem in $F$ is decidable. That is, there is an algorithm that decides given a finite subset $X$ of $F$ whether it generates the whole $F$. A recent improvement of \cite[Theorem 7.14]{G16} is the following (as yet unpublished) theorem. 

\begin{theorem}\cite{G20}\label{gen1}
	Let $H$ be a subgroup of $F$. Then $H=F$ if and only if the following conditions are satisfied. 
	\begin{enumerate}
		\item[(1)] $\Cl(H)=F$. 
		\item[(2)] $H[F,F]=F$.
	\end{enumerate}
\end{theorem}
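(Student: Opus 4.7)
The plan is to prove the nontrivial direction by showing that $[F,F]\subseteq H$; combined with the hypothesis $H[F,F]=F$, this immediately yields $H = H\cdot[F,F] = F$. (The forward implication is trivial, as both conditions hold when $H=F$.) Since the commutator subgroup $[F,F]$ is simple (a classical result going back to Brown--Geoghegan / Higman), it will suffice to exhibit $H\cap[F,F]$ as a nontrivial normal subgroup of $[F,F]$.

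For non-triviality, I would argue from $\Cl(H)=F$ that $H$ must contain nontrivial elements supported in proper dyadic subintervals $[v]\subsetneq[0,1]$; any such element has both end-slopes equal to $1$ and therefore lies in $[F,F]$. Heuristically, if every nontrivial element of $H$ had support touching $0$ or $1$, the germs of $H$ near the endpoints would be too restricted for the piecewise-$H$ functions to exhaust $F$, contradicting $\Cl(H)=F$.

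The main step, and the main obstacle, is normality. Given $k\in[F,F]$ and $h\in H\cap[F,F]$, I want to show $khk^{-1}\in H$. By $H[F,F]=F$, one may write $k=h'c$ with $h'\in H$ and $c\in[F,F]$; the element $c$ is determined by $h'$ only up to left-multiplication by elements of $H\cap[F,F]$. The plan is to use $\Cl(H)=F$ to adjust the choice of $h'$ so that the resulting $c$ has support disjoint from $\supp(h)$. Once this is achieved, $c$ and $h$ commute, and
\[
khk^{-1} \;=\; h'\,c\,h\,c^{-1}\,(h')^{-1} \;=\; h'\,h\,(h')^{-1} \;\in\; H.
\]
The hard kernel is this adjustment step: for each $c\in[F,F]$ and each finite union of dyadic intervals $I\supseteq \supp(h)$, one needs an element of $H\cap[F,F]$ agreeing with $c$ on $I$ and supported in a slightly larger dyadic neighborhood. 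I would extract such a ``local density'' of $H$ in $F$ from the closure theory of $\cite{GS,G16}$, using the natural copies $F_{[v]}$ and the tree-diagram manipulations of Remark~\ref{rem} to splice together approximations piece by piece. Making this quantitative and uniform -- so that the choice of $h'$ can indeed be perturbed to push the error off of $\supp(h)$ -- is presumably the mechanism by which the clean condition $H[F,F]=F$ replaces the more elaborate hypotheses appearing in \cite[Theorem~7.14]{G16}.
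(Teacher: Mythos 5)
This theorem is quoted from the unpublished reference \cite{G20} and the paper contains no proof of it, so there is no in-paper argument to compare against; I can only assess your proposal on its own terms. Your global reduction is sound as a skeleton: since $[F,F]$ is simple, it would indeed suffice to show that $H\cap[F,F]$ is a nontrivial normal subgroup of $[F,F]$, whence $[F,F]\le H$ and $H=H[F,F]=F$. The commuting trick $khk^{-1}=h'\,c\,h\,c^{-1}\,(h')^{-1}=h'h(h')^{-1}$ is also correct once $\supp(c)\cap\supp(h)=\emptyset$ is achieved.

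The genuine gap is the ``adjustment'' step, and it is not a technicality that can be outsourced to the closure theory --- it is essentially the whole theorem. What you need is: for every $c\in[F,F]$ and every compact dyadic neighborhood $I$ of $\supp(h)$ in $(0,1)$, an element of $H$ agreeing with $c$ on $I$. But the hypothesis $\Cl(H)=F$ only says that every element of $F$ is \emph{piecewise}-$H$, i.e., agrees with (possibly different) elements of $H$ on the pieces of some dyadic subdivision. Splicing pieces of different elements of $H$ together produces, by definition, an element of $\Cl(H)$, not of $H$; so the ``local density'' you invoke does not follow from $\Cl(H)=F$, and the gap between piecewise agreement and agreement with a single element of $H$ on a prescribed interval is exactly the gap between $\Cl(H)=F$ and $H=F$ that the theorem is designed to bridge. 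As written, the argument assumes a statement at least as strong as the conclusion. Separately, your non-triviality step is only heuristic, though it is the more repairable of the two: if $H\cap[F,F]=1$ then $H$ embeds in $F/[F,F]\cong\mathbb{Z}^2$ and is abelian, and by the results of \cite{GS} the closure of a solvable subgroup is solvable of the same derived length, so $\Cl(H)$ would be abelian, contradicting $\Cl(H)=F$. That repair, however, leans on nontrivial structure theory of closures rather than on the germ-at-the-endpoints heuristic you sketch.
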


%

Below we  apply Theorem \ref{gen1} to prove that a given subset of $F$ is a generating set of $F$. To verify that Condition (1) in the theorem holds, we will make use of the following observation. The lemma is an immediate result of Remark 7.2 and Lemma 10.6 in \cite{G16}.

\begin{lemma}\label{suffice}
	Let $H$ be a subgroup of $F$. If for each of the following pairs of branches there is an element in $H$ which has the given pair of branches, then $\Cl(H)=F$. 
	\begin{enumerate}
		\item[(1)] $00\rightarrow 0$
		\item[(2)] $11\rightarrow 1$
		\item[(3)] $01\rightarrow 10$
		\item[(4)] $01\rightarrow 010$. 
		\item[(5)] $10\rightarrow 011$. 
	\end{enumerate}
\end{lemma}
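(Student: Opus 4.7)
The plan is to reduce the lemma to the general criterion for $\Cl(H)=F$ established as Remark 7.2 and Lemma 10.6 of \cite{G16}, and then verify that the five pairs of branches listed here produce exactly the local germs required by that criterion. Recall that $\Cl(H)$ is defined as the group of piecewise-$H$ functions, so membership in $\Cl(H)$ is a purely local condition: one must be able to realize, on each dyadic subinterval where a given $f\in F$ is linear, a ``matching'' piece of an element of $H$. The criterion from \cite{G16} formalizes this by saying that $\Cl(H)=F$ provided $H$ realizes a specific finite list of germs at the ``boundary'' dyadic points $0$, $1$, and $1/2$.

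First, I would translate each of the five pairs of branches into the germ data it produces. Recall from Section \ref{sec:tree} that an element $h\in H$ with pair of branches $u\to v$ maps the dyadic interval $[u]$ linearly onto $[v]$ with slope $2^{|u|-|v|}$. Hence: (1) $00\to 0$ gives slope $2$ to the right of $0$; (2) $11\to 1$ gives slope $1/2$ to the left of $1$; (3) $01\to 10$ gives an element with slope $1$ that carries the interval $[01]$ lying immediately to the left of $1/2$ onto $[10]$ lying immediately to the right of $1/2$; (4) $01\to 010$ gives slope $1/2$ on a dyadic neighborhood of $1/2^-$ with prescribed image; and (5) $10\to 011$ gives slope $1/2$ on a dyadic neighborhood of $1/2^+$ with prescribed image. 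These germs cover the two endpoints $0^+$ and $1^-$ and provide a crossing germ plus two one-sided ``contracting'' germs at $1/2$.

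Second, I would invoke Remark 7.2 and Lemma 10.6 of \cite{G16} to assert that the germ conditions listed in (1)--(5) are precisely those required for $\Cl(H)$ to contain a generating set of $F$ (after decomposing and re-gluing). Once this matching is established, the conclusion $\Cl(H)=F$ is immediate.

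The only real obstacle is the bookkeeping verification that the enumeration of germs in the criterion of \cite{G16} coincides with the list (1)--(5). That is essentially a combinatorial check: by Remark \ref{rem}, any pair of branches $uw\to vw$ with $w$ arbitrary is automatically available once $u\to v$ is, so it suffices to exhibit germs at the critical dyadic points $0,\tfrac12,1$ with enough slope and side information to recover every other local behavior by extending words on the right. I expect nothing deeper than this to be involved, which is why the authors quote the result as \emph{immediate} from \cite{G16}.
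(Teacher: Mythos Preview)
Your approach is exactly the paper's: the authors give no argument beyond the sentence ``The lemma is an immediate result of Remark 7.2 and Lemma 10.6 in \cite{G16},'' and you have correctly identified that the content is simply matching the five listed pairs of branches against the germ criterion from \cite{G16}. One small slip in your germ translation: the pair $11\to 1$ has slope $2^{|11|-|1|}=2$ at $1^-$, not $1/2$ (the interval $[11]$ is stretched onto $[1]$); this does not affect the argument, since what matters is that a nontrivial germ at $1^-$ is realized.
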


\section{Proof of Theorem $1$}

We claim that for every $k\ge 2$ in both the max-model and the sum-model, the probability of a random $k$-generated subgroup of $F$ being equal to $F$ is positive. Let $k\ge 2$. It suffices to prove that the set of all $k$-unordered tuples in $X_k$ which generate $F$ has positive density in $X_k$ (in both models). To prove that, we will consider a subset $S\subseteq X_k$  of unordered tuples of a certain form such that each tuple in $S$ generates $F$. We will prove that with respect to both stratifications the asymptotic density of $S$ in $X_k$ is positive. 

To define $S$ we consider the reduced tree-diagrams of   $x=x_0^2x_1^2x_4^{-1}x_2^{-1}x_1^{-1}x_0^{-2}$ and $y=x_0$.
The reduced tree-diagrams $(T_+(x),T_-(x))$ of $x$ and $(T_+(y),T_-(y))$ of $y$ consist of the following branches.

\[
x:
\begin{cases}
000 & \rightarrow 000\\
00100  & \rightarrow 0010\\
00101 & \rightarrow 00110\\
0011 & \rightarrow 00111\\
01 & \rightarrow 010\\
10 & \rightarrow 011\\
11 & \rightarrow 1\\
\end{cases}\ \ \ 
y :
\begin{cases}
00 & \rightarrow 0\\
01  & \rightarrow 10\\
1 & \rightarrow 11\\
\end{cases} \qquad
\]

We define functions $\phi_i\colon X\to X$ for $i=1,2$ as follows. Let $g\in F$ be an element with reduced tree-diagram $(T_+,T_-)$. We let $\phi_1(g)=x\cdot g_{[00110]}$ and $\phi_2(g)=y\cdot g_{[11]}$ . Note that by Remark \ref{rem}, since $00101 \rightarrow 00110$ is a pair of branches of the reduced tree-diagram of $x$, the image $\phi_1(g)$ is obtained from the tree-diagram $(T_+(x),T_-(x))$ by attaching the tree $T_+$ to the end of the branch $00101$ of $T_+(x)$ and the tree $T_-$ to the end of the branch $00110$ of $T_-(x)$, as depicted in figure \ref{fig:1}. In particular, the mapping $\phi_1$ is injective.  Similarly, since $1\rightarrow 11$ is a pair of branches of the reduced tree-diagram of $y$, the tree-diagram of $\phi_2(g)$ is as depicted in figure \ref{fig:2} and the mapping $\phi_2$ is injective. Note also that $|\phi_1(g)|=|g|+6$ and $|\phi_2(g)|=|g|+2$.

\begin{figure}[ht]
	\centering
	\begin{subfigure}{.6\textwidth}
		\centering
		\includegraphics[width=.6\linewidth]{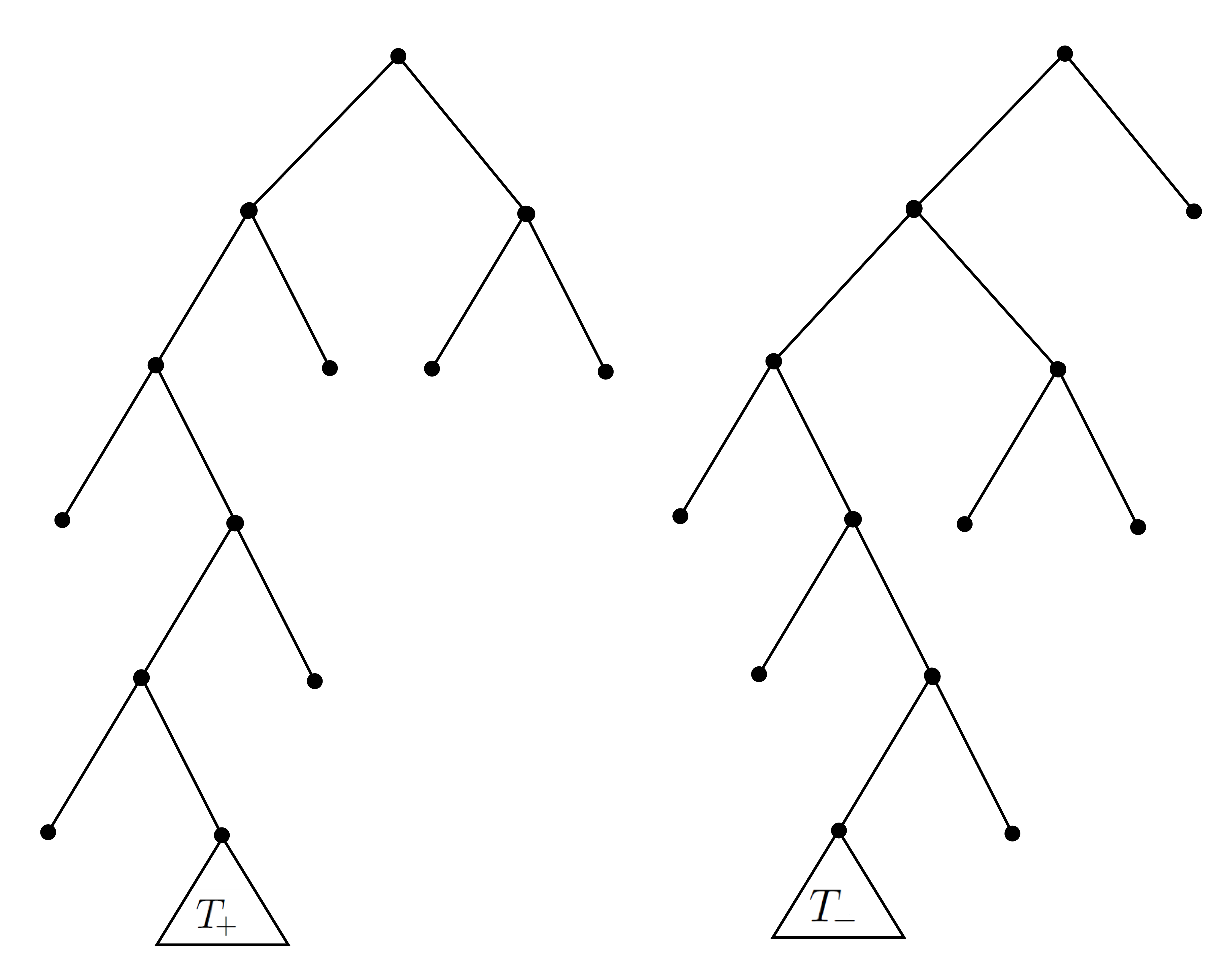}
		\caption{}
		\label{fig:1}
	\end{subfigure}%
	\begin{subfigure}{.5\textwidth}
		\centering
		\includegraphics[width=.5\linewidth]{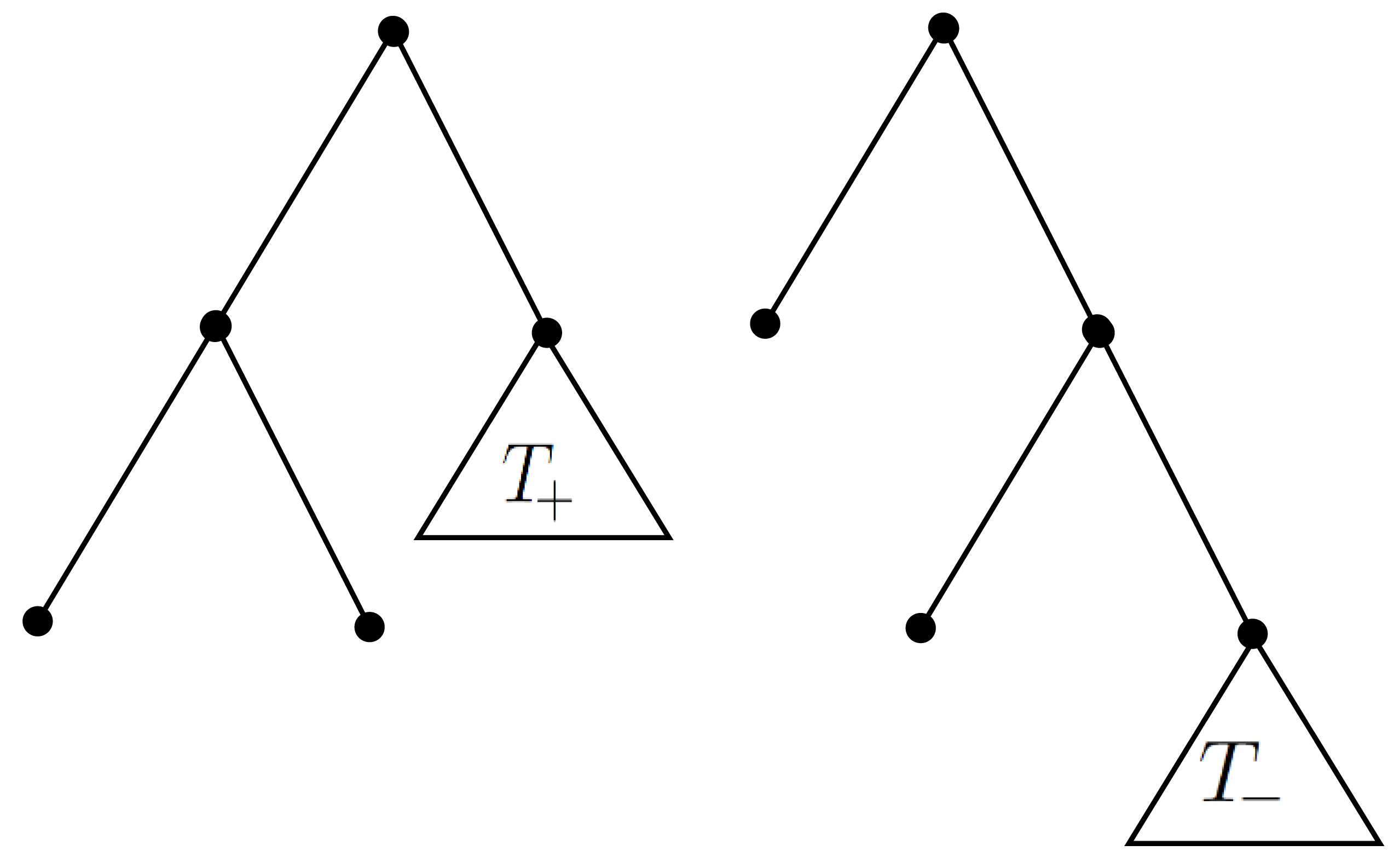}
		\caption{}
		\label{fig:2}
	\end{subfigure}
	\caption{From left to right: the reduced tree-diagrams of $\phi_1(g)$ and $\phi_2(g)$, respectively, for $g$ with reduced tree-diagram $(T_+,T_-)$.}
	\label{fig:12}
\end{figure}




We define a function $\Phi\colon X_k \to X_k$ as follows. We order each $k$-tuple $\{h_1,\dots,h_k\}$ in $X_k$ so that $|h_1|\ge |h_2|\ge\dots\ge|h_k|$. Then we let 
$$\Phi(\{h_1,\dots,h_k\})=\{\phi_1(h_1),\phi_2(h_2),h_3,\dots,h_k\}.$$
Since $|\phi_1(h_1)|>|\phi_2(h_2)|>|h_3|\ge\dots\ge |h_k|$ and the mappings $\phi_1,\phi_2$ are injective, one can easily reconstruct the tuple $\{h_1,\dots,h_k\}$ from its image under $\Phi$. Hence, $\Phi$ is injective. Note also that in the sum model we have 
$$||\Phi(\{h_1,\dots,h_k\})||_{\mathrm{sum}}=||\{h_1,\dots,h_k\}||_{\mathrm{sum}}+8.$$ 
and that in the max model we have 
$$||\Phi(\{h_1,\dots,h_k\})||_{\max}=||\{h_1,\dots,h_k\}||_{\max}+6.$$
We let $S=\Phi(X_k)$.





%

\begin{lemma}\label{lem:SgenF}
	For every $h_1,h_2\in F$, the set $\{\phi_1(h_1),\phi_2(h_2)\}$ generates $F$. In particular, every tuple in $S$ generates $F$. 
\end{lemma}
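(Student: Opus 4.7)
The plan is to apply Theorem~\ref{gen1} to the subgroup $H=\la\phi_1(h_1),\phi_2(h_2)\ra$, which reduces the claim to verifying (1)~$\Cl(H)=F$ and (2)~$H[F,F]=F$.

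For (1) the idea is to exhibit elements of $H$ realizing each of the five pairs of branches listed in Lemma~\ref{suffice}. By Remark~\ref{rem}, the reduced tree-diagram of $\phi_1(h_1)=x\cdot(h_1)_{[00110]}$ is obtained from $(T_+(x),T_-(x))$ by grafting the tree-diagram of $h_1$ at the branch pair $00101\to 00110$, so every other pair of branches of $x$ survives in $\phi_1(h_1)$; an analogous statement holds for $\phi_2(h_2)=y\cdot(h_2)_{[11]}$ relative to the branch pair $1\to 11$ of $y$. Reading off the listed branches, $\phi_2(h_2)$ then supplies $00\to 0$ and $01\to 10$, while $\phi_1(h_1)$ supplies $11\to 1$, $01\to 010$, and $10\to 011$. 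These are exactly the five pairs required, so $\Cl(H)=F$.

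For (2), I would pass to the abelianization $F/[F,F]\cong\BZ^2$ via the slope map $g\mapsto(\log_2 g'(0^+),\log_2 g'(1^-))$. Since $[00110]$ is strictly interior to $(0,1)$, $(h_1)_{[00110]}$ fixes neighbourhoods of both $0$ and $1$, hence lies in $[F,F]$; so $\phi_1(h_1)$ has the same image as $x$. A direct computation (using $x_0\mapsto(1,-1)$ and $x_i\mapsto(0,-1)$ for $i\ge 1$, since each such $x_i$ is conjugate to $x_1$, or equivalently by reading the slopes off the branches $000\to 000$ and $11\to 1$ of the tree-diagram) gives $x\mapsto(0,1)$. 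For $\phi_2(h_2)$, the interval $[11]$ contains the right endpoint, so $(h_2)_{[11]}$ fixes a neighbourhood of $0$ but may carry nontrivial slope at $1^-$, producing an image of the form $(0,m)$ with $m=\log_2 h_2'(1^-)\in\BZ$; combining with $y=x_0\mapsto(1,-1)$ gives $\phi_2(h_2)\mapsto(1,-1+m)$. The matrix $\bigl(\begin{smallmatrix}0&1\\1&-1+m\end{smallmatrix}\bigr)$ has determinant $-1$ for every $m$, so the two image vectors generate $\BZ^2$, verifying (2). The ``In particular'' claim follows immediately, since every tuple in $S$ contains $\phi_1(h_1)$ and $\phi_2(h_2)$ among its entries.

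The proof is essentially mechanical once Theorem~\ref{gen1}, Remark~\ref{rem} and Lemma~\ref{suffice} are available; the only point requiring real care is the abelianization of $\phi_2(h_2)$, since $[11]$ touches the endpoint $1$ and $(h_2)_{[11]}$ need not be a commutator. The specific choice $y=x_0$ is what rescues the argument: it contributes a nonzero first coordinate, so the two abelianization vectors span $\BZ^2$ regardless of the value of $m$.
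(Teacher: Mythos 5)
Your proof is correct and follows essentially the same route as the paper: apply Theorem~\ref{gen1}, verify $\Cl(H)=F$ via Lemma~\ref{suffice} by reading off the surviving branch pairs ($00\to 0$, $01\to 10$ from $\phi_2(h_2)$ and $11\to 1$, $01\to 010$, $10\to 011$ from $\phi_1(h_1)$), and verify $H[F,F]=F$ by computing the abelianization images $(0,1)$ and $(1,\ast)$. The only difference is cosmetic --- you make the second coordinate $m=\log_2 h_2'(1^-)$ explicit where the paper leaves it as an unspecified integer $k$, which changes nothing in the argument.
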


\begin{proof}
	Let $h_1,h_2\in F$ and let $g_1=\phi_1(h_1)$, $g_2=\phi_2(h_2)$. It suffices to prove that the subgroup $H=\la g_1,g_2\ra$ satisfies $H=F$. To do so, we prove that Conditions (1) and (2) of Theorem \ref{gen1} hold for $H$.

	(1) Note that $g_1$ has the pairs of branches $11\rightarrow 1$,  $10\rightarrow 011$ and  $01\rightarrow 010$. Similarly, $g_2$ has the pairs of branches  $00\rightarrow 0$ and $01\rightarrow 10$ . Hence, by Lemma \ref{suffice}, $\Cl(H)=F$. As such $H$ satisfies Condition (1) of Theorem \ref{gen1}.
	 
	(2) To prove that $H[F,F]=F$ we consider the image of $H$ in the abelianization of $F$. Recall \cite{CFP} that the map $\Gamma\colon F\to \mathbb{Z}^2$ mapping an element $f\in F$ to $(\log_2 f'(0^+),\log_2 f'(1^-))$ is onto and has kernel $[F,F]$. Thus, if $\Gamma$ maps $H$ onto $\mathbb{Z}^2$ then $H[F,F]=F$.
	We note that $\Gamma(g_2)=(1,k)$ for some $k\in\mathbb{Z}$. Indeed, $g_2$ has the pair of branches $00\rightarrow 0$, hence it maps fractions $.00\alpha$ to $.0\alpha$. In particular, it has slope $2^1$ at $0^+$. Similarly, we have $\Gamma(g_1)=\Gamma(x)=(0,1)$. Since $(1,k)$ and $(0,1)$ generate $\mathbb{Z}^2$, we have $H[F,F]=F$, so Condition (2) of Theorem \ref{gen1} holds as well. 
	 
	 
	 Since both the conditions of Theorem \ref{gen1} hold for $H$, we have that $H=F$. 
\end{proof}

In view of Lemma \ref{lem:SgenF}, to finish the proof of Theorem \ref{Thm1} it suffices to prove the following. 

\begin{lemma}\label{pos_den}
	The asymptotic density of $S$ in $X_k$ is positive with respect to both stratifications. As such, the  asymptotic density of the set of $k$-unordered tuples which generate $F$  is positive with respect to both stratifications. 
\end{lemma}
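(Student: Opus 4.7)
The strategy is to exploit the injectivity of $\Phi$ together with the known size-shift formulas, and then use the asymptotic estimates in Lemma \ref{rn} to bound the densities from below by a positive constant. Since $\Phi\colon X_k\to X_k$ is injective and $\Phi(\{h_1,\dots,h_k\})$ has sum-size exactly $||\{h_1,\dots,h_k\}||_{\mathrm{sum}}+8$ and max-size exactly $||\{h_1,\dots,h_k\}||_{\max}+6$, the image $S=\Phi(X_k)$ satisfies
$$|S\cap \Sph_k^{\mathrm{sum}}(n)|=|\Sph_k^{\mathrm{sum}}(n-8)| \quad\text{and}\quad |S\cap \Sph_k^{\max}(n)|=|\Sph_k^{\max}(n-6)|$$
for all sufficiently large $n$. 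So the entire problem reduces to comparing sphere sizes that differ in radius by a bounded constant.

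For the sum model, combine parts (2) and (1) of Lemma \ref{rn}: for large $n$,
$$\frac{|S\cap \Sph_k^{\mathrm{sum}}(n)|}{|\Sph_k^{\mathrm{sum}}(n)|}=\frac{|\Sph_k^{\mathrm{sum}}(n-8)|}{|\Sph_k^{\mathrm{sum}}(n)|}\ge \frac{r_{n-8-k+1}}{r_{n+k-1}}.$$
Writing $m=n+k-1$ the right-hand side equals $r_{m-(2k+6)}/r_m$, which by Lemma \ref{rn}(1) tends to $\mu^{-(2k+6)}>0$. Hence $\liminf_{n\to\infty}|S\cap \Sph_k^{\mathrm{sum}}(n)|/|\Sph_k^{\mathrm{sum}}(n)|>0$.

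For the max model, combine parts (3) and (1) of Lemma \ref{rn}:
$$\frac{|S\cap \Sph_k^{\max}(n)|}{|\Sph_k^{\max}(n)|}=\frac{|\Sph_k^{\max}(n-6)|}{|\Sph_k^{\max}(n)|}\ge \frac{\tfrac{1}{k!}(r_{n-6})^k}{k\,(r_n)^k}=\frac{1}{k\cdot k!}\left(\frac{r_{n-6}}{r_n}\right)^{k},$$
and by Lemma \ref{rn}(1) this tends to $\mu^{-6k}/(k\cdot k!)>0$. So $S$ has positive asymptotic density in $X_k$ with respect to the max stratification as well.

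The combination of Lemma \ref{lem:SgenF} with the positivity of the density of $S$ immediately yields Lemma \ref{pos_den}, completing the proof of Theorem \ref{Thm1}. There is no genuine obstacle here; the only work is to verify that $\Phi$ is injective with the correct bounded size-shift, which has already been recorded in the discussion preceding the lemma, and then to invoke the growth estimates $r_{n-k}/r_n\to \mu^{-k}$ to see that a bounded shift in radius produces only a bounded multiplicative loss in the density.
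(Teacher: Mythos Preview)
Your proof is correct and essentially identical to the paper's own argument: both use injectivity of $\Phi$ together with the fixed size-shifts ($+8$ for sum, $+6$ for max) to reduce to a ratio of sphere sizes at nearby radii, and then apply the bounds of Lemma~\ref{rn}(2),(3) together with $r_{n-k}/r_n\to\mu^{-k}$ to obtain the same explicit positive lower bounds $\mu^{-(2k+6)}$ and $\mu^{-6k}/(k\cdot k!)$.
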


\begin{proof}
	Let us start with the sum stratification.
	As noted above, for each $k$-tuple $\tau$, we have 	$||\Phi(\tau)||_{\mathrm{sum}}=||\tau||_{\mathrm{sum}}+8.$
	Hence, for each $n$ we have  $$\Phi(\Sph_k^{sum}(n))=S\cap \Sph_k^{sum}(n+8).$$
	Since $\Phi$ is injective we have the following. 
	
	$$\liminf_{n\rightarrow\infty}\frac{|S\cap \Sph_k^{sum}(n)|}{|\Sph_k^{sum}(n)|}= \liminf_{n\rightarrow\infty}\frac{|\Sph_k^{sum}(n-8)|}{|\Sph_k^{sum}(n)|}\ge\lim_{n\rightarrow\infty} \frac{r_{n-8-k+1}}{r_{n+k-1}}=\mu^{-2k-6}>0$$
	
	Similarly, for each $k$-tuple $\tau$, we have 	$||\Phi(\tau)||_{\max}=||\tau||_{\max}+6.$
	Hence, for each $n$ we have  $$\Phi(\Sph_k^{max}(n))=S\cap \Sph_k^{max}(n+6).$$
	Since $\Phi$ is injective we have the following. 
	
	$$\liminf_{n\rightarrow\infty}\frac{|S\cap \Sph_k^{max}(n)|}{|\Sph_k^{max}(n)|}= \liminf_{n\rightarrow\infty}\frac{|\Sph_k^{max}(n-6)|}{|\Sph_k^{max}(n)|}\ge\lim_{n\rightarrow\infty} \frac{\frac{1}{k!}(r_{n-6})^k}{k(r_{n})^k}=\frac{1}{k!k}\mu^{-6k}>0$$
Hence, the asymptotic density of $S$ in $X_k$ is positive with respect to both stratifications. 
\end{proof}




 
\section{Perpetual subgroups of Thompson group $F$}\label{sec:per}

Recall that in \cite{CERT}, a finitely generated subgroup $H\le F$ is said to be \emph{persistent}  (in a given model) if for every large enough $k$, the probability of a $k$-generated subgroup of $F$ being isomorphic to $H$ is positive. 
Similarly, we will say that a finitely generated subgroup $H\le F$ is \emph{perpetual} (in a given model) if for any large enough $k$ the probability of a $k$-generated subgroup of $F$ coinciding with $H$ is positive. Clearly, a perpetual subgroup of $F$ (with respect to a given model) is also persistent (with respect to the same model). 
Theorem \ref{Thm1} says that Thompson group $F$ is a perpetual subgroup of itself in both the sum-model and the max-model. 

Recall that Cleary, Elder, Rechnitzer and Taback \cite{CERT} proved that every finitely generated subgroup of $F$ is persistent in the sum-model. In the max-model, they have proved that cyclic subgroups of $F$ are not persistent, so clearly they are not perpetual in that model. More generally, we have the following.

\begin{lemma}\label{abelian}
	Abelian subgroups of $F$ are not perpetual subgroups (in either model). 
\end{lemma}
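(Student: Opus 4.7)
The strategy will be to prove something stronger than non-perpetuality: for every finitely generated abelian $H\le F$ and every $k\ge 1$, the asymptotic density in $X_k$ of $k$-tuples generating $H$ is zero, in both models. Since any tuple $(h_1,\ldots,h_k)$ with $\la h_1,\ldots,h_k\ra = H$ has all its components in $H$, it suffices to bound the density of $k$-tuples lying in $H^k$.

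The key input will be a polynomial upper bound on the growth of $H$ in the tree-size metric: the number of $g\in H$ with $|g|\le n$ is $O(n^r)$, where $r=\rank(H)$. Two facts combine to yield this. First, the tree-size is subadditive: $|fg|\le |f|+|g|$ for all $f,g\in F$. This is immediate from the multiplication rule recalled in Remark~\ref{r:000}, since before reduction $fg$ is represented by a pair of trees with $|S|$ carets each, where $S$ is the common refinement of $T_-^1$ and $T_+^2$, and one easily checks $|S|\le|f|+|g|$; reduction only decreases the size. Second, I will use the fact that finitely generated abelian subgroups of $F$ are undistorted, going back to Burillo's work on quasi-isometrically embedded subgroups of $F$. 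Subadditivity together with undistortion makes $|\cdot|$ restricted to $H$ bilipschitz to the $L^1$ word metric on $H\cong \BZ^r$ with respect to any basis, from which the polynomial bound on $H$-balls is immediate.

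Given this polynomial bound, the rest is a short comparison with Lemma~\ref{rn}. In the sum model, every $k$-tuple in $H^k$ of total size $n$ has each component of tree-size $\le n$, so there are at most $O(n^{rk})$ such tuples, whereas $|\Sph_k^{\mathrm{sum}}(n)|\ge r_{n-k+1}$ grows like $\mu^n$. The ratio is $O(n^{rk}\mu^{-n})\to 0$. In the max model, the same $O(n^{rk})$ upper bound applies to the number of tuples with max size $n$, while $|\Sph_k^{\mathrm{max}}(n)|\ge \frac{1}{k!}r_n^k$ grows like $\mu^{kn}$, again giving a ratio tending to $0$. Hence $H$ is not perpetual in either model.

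The principal obstacle is justifying the polynomial-growth estimate, i.e., the undistortion of $H$ in $F$; everything else is routine counting. If one prefers to avoid citing undistortion, a more hands-on route is to first use the abelianization $\Gamma\colon F\to\BZ^2$ to control $H/(H\cap[F,F])$ (since $|g|\le n$ forces $\Gamma(g)\in\{-n,\ldots,n\}^2$, yielding $O(n^2)$ cosets), and then bound elements of each coset by exhibiting enough additional homomorphisms of $H$ to $\BZ$ detectable from the tree-diagram (for instance, slopes at well-chosen dyadic breakpoints of a fixed element of $H\cap[F,F]$), to obtain a polynomial bound per coset.
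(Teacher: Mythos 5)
Your proposal is correct, and its overall architecture coincides with the paper's: bound the tuples generating $H$ by tuples lying in $H^k$, show that the number of elements of $H$ of tree-size at most $n$ grows polynomially in $n$, and compare with the exponential lower bounds on the sphere sizes from Lemma~\ref{rn}. The only substantive difference is how the polynomial growth of $H$ in the caret metric is established. You outsource it to undistortion of finitely generated abelian subgroups; the paper proves it directly: by Guba--Sapir, $H$ sits inside $\langle f_1\rangle\times\cdots\times\langle f_m\rangle$ with the $f_i$ non-trivial of pairwise disjoint supports, and since an element having slope $2^r$ on some interval has at least $|r|$ carets, one gets $|f_1^{\ell_1}\cdots f_m^{\ell_m}|\ge\max_i|\ell_i|$, hence at most $(2n+1)^m$ elements of size at most $n$ and at most $(2n+1)^{mk}$ relevant tuples. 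Two caveats about your primary route. First, the blanket statement that every finitely generated abelian subgroup of $F$ is undistorted is not literally what Burillo proves (he treats specific quasi-isometrically embedded copies of $\mathbb{Z}^n$); the general statement is deduced from exactly the Guba--Sapir structure theorem plus the slope argument, i.e., from the computation the paper carries out, so the citation is doing more work than it can bear as stated. Second, undistortion is a statement about word metrics, so to transfer it to the caret count you implicitly need the standard fact (Burillo--Cleary--Stein, Fordham) that the number of carets is quasi-isometric to word length in $F$; this should be made explicit. Neither point is a gap --- both facts are true and citable --- but the self-contained slope argument is shorter than assembling them. (Note also that subadditivity of $|\cdot|$ is not actually needed for the counting; only the lower bound on the tree-size in terms of the abelian word length is used.)
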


\begin{proof}
	Let $H$ be a finitely generated abelian subgroup of $F$. We will prove that for every $k\in\mathbb{N}$ the probability of a random $k$-generated subgroup of $F$ being contained in $H$ is zero (in both models). Clearly, we can assume that $H$ is non-trivial. 
	
	Since $H$ is a finitely generated abelian subgroup of  $F$, it is contained in a direct product of finitely many cyclic subgroups of $F$ which have pairwise disjoint supports (see \cite[Theorem 16]{GuSa}). That is, there exists some $m\in\mathbb{N}$ and non-trivial elements $f_1,\dots,f_m\in F$ with pairwise disjoint supports, such that $H$ is contained in $\langle f_1,\dots,f_m\rangle$. In particular, every element of $H$ is the form $f_1^{\ell_1}\cdots f_m^{\ell_m}$ for some $\ell_1,\dots,\ell_m\in\mathbb{Z}$.
	
	It is easy to see that if $f\in F$ is a piecewise linear homeomorphism of $[0,1]$ such that on some interval the slope of $f$ is $2^r$, then the size of $f$ is at least $|r|$ (see, for example, \cite[Lemma 18]{CERT}). It follows that for any
	non-trivial element $f\in F$ and every $n\in\mathbb{Z}$, the size of $|f^n|$ is at least $|n|$. Indeed,  if the slope of the first non-identity linear piece of $f$ is $2^k$ (for $k\neq 0$) then the slope of the first non-identity linear piece of $f^n$ is $2^{kn}$. 
	
	Since the elements $f_1,\dots,f_m$ have disjoint supports, for every $\ell_1,\dots,\ell_m\in\mathbb{Z}$ we have 
		$$(*)\ \ |f_1^{\ell_1}\cdots f_{m}^{\ell_m}|\ge\max\{|\ell_1|,\dots,|\ell_m|\}.$$
	
%
%
%

	
	Now, let $k\in\mathbb{N}$ and let $S$ be the set of all unordered $k$-tuples of elements from $H$. It suffices to prove that the asymptotic density of $S$ in $X_k$ is zero in both models. 
		
	For each $n\in\mathbb{N}$, let $S_n\subseteq S$ be the subset of   all unordered $k$-tuples of elements in $H$ such that all the elements in the tuple are of size at most $n$. We claim that $|S_n|\le (2n+1)^{mk}$. 
	Indeed, if $h$ is an element of $H$ of size at most $n$ then there exist $\ell_1,\dots,\ell_m\in\mathbb{Z}$ such that 
	$h=f_1^{\ell_1}\cdots f_m^{\ell_m}$. By $(*)$, we must have $|\ell_1|,\dots,|\ell_m|\le n$. As such, there are at most $(2n+1)^m$ elements in $H$ of size at most $n$. It follows that the number of unordered $k$-tuples of elements in $H$ of size at most $n$ is bounded from above by $(2n+1)^{mk}$, as claimed. 
	
	Now, let us consider the asymptotic density of $S$ in $X_k$ in the sum model. Note that $S\cap \Sph_k^{sum}(n)\subseteq S_n$. Hence,
	
	$$\lim_{n\rightarrow\infty}\frac{|S\cap \Sph_k^{sum}(n)|}{|\Sph_k^{sum}(n)|}\le \lim_{n\rightarrow\infty}\frac{|S_n|}{|\Sph_k^{sum}(n)|}\le\lim_{n\rightarrow\infty} \frac{(2n+1)^{mk}}{r_{n-k+1}}=0,$$
	where the last equality follows from $r_n$ growing  exponentially (see Lemma \ref{rn}(1)). Hence, the asymptotic density of $S$ in $X_k$ is zero in the sum model. A similar calculation works for the max-model.
\end{proof}

Lemma \ref{abelian} shows that with respect to both models, not all subgroups of $F$ are perpetual. The following theorem gives a wide class of perpetual subgroups of $F$. Recall that a \emph{natural copy} of Thompson's group $F$ is any subgroup of it of the form $F_{[v]}$ for any finite binary word $v$ (see Section \ref{natural}). 

\begin{theorem}\label{nat}
	Let $H$ be a finitely generated subgroup of $F$ which contains a natural copy of $F$. Then $H$ is a perpetual subgroup of $F$ with respect to both models.
\end{theorem}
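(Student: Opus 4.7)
The plan is to mimic the proof of Theorem \ref{Thm1}: for $k$ sufficiently large (namely $k\geq m+2$ where $H=\la h_1,\dots,h_m\ra$), I will construct an injection $\Phi\colon X_k\to X_k$ whose image consists entirely of $k$-tuples generating $H$, and such that both $\|\Phi(\tau)\|_{\mathrm{sum}}$ and $\|\Phi(\tau)\|_{\max}$ differ from the corresponding sizes of $\tau$ by fixed additive constants. Given such a $\Phi$, the density computation of Lemma \ref{pos_den} transfers verbatim: the image has positive asymptotic density in both stratifications, proving $H$ is perpetual.

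First I prepare generators. Adjoin $\alpha=(x_0)_{[v]}$ and $\beta=(x_1)_{[v]}$ (which generate $F_{[v]}$) to get $H=\la h_1,\dots,h_m,\alpha,\beta\ra$. I then replace each $h_j$ by $h_j\cdot \alpha\in H$ (which keeps the generating set, since $\alpha$ is in the set and $h_j$ can be recovered as $(h_j\alpha)\alpha^{-1}$). An analysis of the product via Remark \ref{rem}, case-split on whether the reduced tree-diagram of the original $h_j$ has $v$ as a minus branch (or refines into $[v]$, or is refined by $[v]$), shows that after this replacement the modified $h_j$ has a minus branch extending $v$ in its reduced tree-diagram; in all cases $v\cdot 0$ can be taken to be such a branch. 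I additionally pad $h_1$ by right-multiplying with a sufficiently large element of $F_{[v]}$ so that $|h_1|$ is strictly larger than all the other additive constants that will appear below. Next I import Theorem \ref{Thm1}'s construction into $F_{[v]}\cong F$: with $x^{[v]}=x_{[v]}$ and $y^{[v]}=y_{[v]}$ the $[v]$-copies of the elements $x,y$ from that proof, define
\[\phi_1^{[v]}(f)=x^{[v]}\cdot (f)_{[v\cdot 00110]},\qquad \phi_2^{[v]}(f)=y^{[v]}\cdot (f)_{[v\cdot 11]}.\]
Applying Lemma \ref{lem:SgenF} through the isomorphism $F\cong F_{[v]}$ shows $\la\phi_1^{[v]}(f),\phi_2^{[v]}(f')\ra=F_{[v]}$ for all $f,f'\in F$.

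Now, ordering $\tau=\{f_1,\dots,f_k\}$ with $|f_1|\geq\cdots\geq|f_k|$ and writing $w=v\cdot 0$, set
\[\Phi(\tau)=\{h_1(f_1)_{[w]},\dots,h_m(f_m)_{[w]},\,\phi_1^{[v]}(f_{m+1}),\,\phi_2^{[v]}(f_{m+2}),\,(f_{m+3})_{[v]},\dots,(f_k)_{[v]}\}.\]
Every slot lies in $H$ (as $(f_i)_{[w]},(f_i)_{[v]},\phi_i^{[v]}(\cdot)\in F_{[v]}\leq H$), so $\la\Phi(\tau)\ra\leq H$. Conversely the two middle slots generate $F_{[v]}$, hence the subgroup contains $(f_j)_{[w]}$ for each $j\le m$, hence contains $h_j=h_j(f_j)_{[w]}\cdot (f_j)_{[w]}^{-1}$, and finally $\alpha,\beta\in F_{[v]}$; together with the original $h_j$'s these generate $H$, so $\la\Phi(\tau)\ra=H$. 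Injectivity follows from Remark \ref{rem}: each slot's reduced tree-diagram extends a known ``prefix'' (the tree-diagram of $h_j$, of $x^{[v]}$, of $y^{[v]}$, or the skeleton to $v$), so the attached $f_i$ is uniquely readable. Remark \ref{rem} also gives each slot size as $|f_i|+c_i$ for a fixed constant $c_i$, so $\|\Phi(\tau)\|_{\mathrm{sum}}=\|\tau\|_{\mathrm{sum}}+\sum_i c_i$ exactly; and since $|h_1|$ was arranged to strictly dominate all other $c_i$ and $|f_1|\geq|f_j|$, slot $1$ attains the maximum and $\|\Phi(\tau)\|_{\max}=\|\tau\|_{\max}+|h_1|$ exactly. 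The counting of Lemma \ref{pos_den} then applies unchanged to give positive density in both models.

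The main obstacle is the branch-arrangement step: verifying that replacing $h_j$ by $h_j\cdot(x_0)_{[v]}$ (or analogous combinations if finer control over which branches appear is needed) really does introduce a prescribed minus branch extending $v$ into the reduced tree-diagram of the modified element in all cases, together with confirming that no accidental cancellation collapses the expected size formula. This is essentially a careful tree-diagram bookkeeping exercise, leveraging the freedom that $F_{[v]}\leq H$ lets us right-multiply by any element we wish without leaving $H$; the infinite supply of branches inside $F_{[v]}$ handles any multiplicity issues (e.g., if one preferred distinct $w_j$ per $j$, they are available).
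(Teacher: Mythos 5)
Your overall strategy is the same as the paper's: for $k\ge m+2$, build an injective, size-controlled map $X_k\to X_k$ whose image consists of $k$-tuples generating $H$, with two designated slots generating the natural copy $F_{[v]}$ via the elements $\phi_1,\phi_2$ from the proof of Theorem \ref{Thm1} transported into $F_{[v]}$, the first $m$ slots of the form $h_j\cdot(\,\cdot\,)_{[\ast]}$ so that $h_j$ is recoverable inside the generated subgroup, and the remaining slots absorbed into $F_{[v]}$; the generation argument and the density transfer from Lemma \ref{pos_den} are correct. However, there is a genuine gap exactly where you flag ``the main obstacle.'' The claim that replacing $h_j$ by $h_j\cdot(x_0)_{[v]}$ always yields a reduced tree-diagram having $v0$ as a minus branch is false: take $h_j=(x_0)_{[v]}^{-1}$ (the product is trivial), or more generally any $h_j$ whose product with $(x_0)_{[v]}$ reduces past the leaf $v0$. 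Without that branch Remark \ref{rem} does not apply, so neither the exact size formula for the slot nor its ``uniquely readable'' structure is established. A second, related gap: since you use the same word $w=v0$ in all of the first $m$ slots, two slots with $|h_j|=|h_{j'}|$ carry the same additive constant, and whenever $h_{j'}^{-1}h_j\in F_{[w]}$ the same image element can be read as coming from either slot; injectivity of $\Phi$ on unordered tuples is therefore not proved.

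Both gaps disappear if you drop the idea of modifying the generators. The paper observes that the reduced minus-tree of each $h_j$ automatically contains a unique branch $v_j$ lying on the path of the infinite word $v0^{\mathbb{N}}$ (hence comparable with $v$), with a corresponding pair of branches $u_j\to v_j$. Choosing $p=v0^{\ell}$ deep enough that every $v_j$ is a prefix of $p$, and then $p_j=v_jw_j$ with $|w_j|=|p|+7+m-j$, one sets $\psi_j(g)=h_j\cdot g_{[p_j]}=h_j\cdot(g_{[w_j]})_{[v_j]}$. Remark \ref{rem} now applies verbatim, the copies $g_{[p_j]}$ still lie in $F_{[p]}\le F_{[v]}\le H$, and the strictly decreasing padding lengths force strictly decreasing slot sizes, which gives injectivity of the global map and exact additive constants in both stratifications. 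With that replacement your argument becomes the paper's proof.
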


\begin{proof}
	The proof is similar to the proof of Theorem \ref{Thm1}.  Let $\{f_1,\dots f_m\}$ be a finite generating set of $H$ and assume that $|f_1|\ge|f_2|\ge \cdots\ge |f_m|$. Let $k\ge m+2$. We will prove that the set of unordered $k$-tuples in $X_k$ which generate $H$ has positive density in $X_k$. 
	
	For each $i=1,\dots,m$, let $(T_+^i,T_-^i)$ be the reduced tree-diagram of $f_i$. By assumption, there is a finite binary word $u$ such that $H$ contains the subgroup $F_{[u]}$. 
	We consider the infinite binary word $u0^{\mathbb{N}}$. For each $i=1,\dots,m$, the finite binary tree $T_-^i$ has a unique branch $v_i$ which is a prefix of $u0^{\mathbb{N}}$.
	Let $u_i$ be the branch of $T_+^i$ such that $u_i\rightarrow v_i$  
	is a pair of branches of $(T_+^i,T_-^i)$.
	We let $\ell\ge 0$ be the minimal integer such that for each $i=1,\dots,m$, the finite binary word $v_i$ is a prefix of $u0^{\ell}$ and let $p\equiv\footnote{$\equiv$ denotes letter by letter equality} u0^{\ell}$. Note that $F_{[p]}$ is contained in $F_{[u]}$ and as such, it is a subgroup of $H$. 
	
	For each $i=1,\dots,m$, the word $v_i$ is a prefix of $p$. Let $w_i$ be the finite binary word of length $|p|+7+m-i$ such that $v_iw_i$ is a prefix of $p0^\mathbb{N}$, and let $p_i\equiv v_iw_i$. Note that for each $i=1,\dots,m$, the word $p$ is a prefix of $p_i$.	Note also that for each $i=1,\dots,m$ and for any element $g\in F$, the $[p_i]$-copy of $g$ is the $[v_i]$-copy of the $[w_i]$-copy of $g$. That is $g_{[p_i]}=(g_{[w_i]})_{[v_i]}$. 
	
	Now, let $x$ and $y$ be the elements of $F$ defined in the proof of Theorem \ref{Thm1} and let $\bar{x}=x_{[p]}$ and $\bar{y}=y_{[p]}$ be the $p$-copies of $x$ and $y$. In particular, $|\bar{x}|=|x|+|p|=6+|p|$ and $|\bar{y}|=|y|+|p|=2+|p|$. Note also that $\bar{x}$ has the pair of branches $p00101\rightarrow p00110$  and that $\bar{y}$ has the pair of branches $p1\rightarrow p11$.
		 
	We define several maps from $X$ to itself. 
	\begin{enumerate}
		\item[(1)] For $i=1,\dots,m$, we let $\psi_i\colon X\to X$ be such that for each $g\in F$,  $\psi_i(g)=f_i\cdot g_{[p_i]}$.
		\item[(2)] For $j=1,2$,  we define $\gamma_j\colon X\to X$  as follows: For each $g\in F$ we let 
		$\gamma_1(g)=\bar{x}\cdot g_{[p00110]}$ and
		$\gamma_2(g)=\bar{y}\cdot g_{[p11]}$. 
		\item[(3)] If $k>m+2$ we also define a mapping $\rho\colon X\to X$ which maps every $g\in F$ to its $[p]$-copy, i.e., $\rho(g)=g_{[p]}$.
	\end{enumerate}

	Note that for each $i=1,\dots,m$ and for every $g\in X$ we have
	$\psi_i(g)=f_i \cdot (g_{[w_i]})_{[v_i]}.$
	Let $(T'_+,T'_-)$ be the tree-diagram of the $[w_i]$-copy of $g$. Since $u_i\rightarrow v_i$ is a pair of branches of $f_i=(T_+^i,T_-^i)$, the image $\psi_i(g)$ is obtained from the tree-diagram $(T_+^i,T_-^i)$ by attaching the tree $T'_+$ to the end of the branch $u_i$ of $T_+^i$ and the tree $T'_-$ to the end of the tree $T_-^i$. In particular, (since the mapping $g\mapsto g_{[w_i]}$ is injective) the mapping $\psi_i$ is injective. In addition, for every $g\in X$, by Remark \ref{rem},
	\begin{equation}
	|\psi_i(g)|=|g_{[w_i]}|+|f_i|=|g|+|w_i|+|f_i|=|g|+|f_i|+|p|+7+m-i.
	\end{equation}

	One can verify similarly that the mappings $\gamma_1,\gamma_2$ and $\rho$ are also injective. In addition, for any $g\in X$,
	we have
	\begin{equation}
	\begin{split}
|\gamma_1(g)| & =|g|+|\bar{x}|=|g|+|p|+6
	\end{split}
	\end{equation}
	\begin{equation}
\begin{split}
|\gamma_2(g)|& =|g|+|\bar{y}|=|g|+|p|+2
\end{split}
\end{equation}
	\begin{equation}
\begin{split}
|\rho(g)|& =|g|+|p|
\end{split}
\end{equation}

We define a function $\Gamma\colon X_k \to X_k$ as follows.  We order each $k$-tuple $$\tau=\{h_1,\dots,h_m,h_{m+1},h_{m+2},h_{m+3},\dots,h_k\}\in X_k$$ so that  $|h_1|\ge |h_2|\ge\cdots\ge |h_k|$.
We let 
\begin{equation*}
\Gamma(\tau)= 
\{\psi_1(h_1),\dots,\psi_m(h_m)\}\cup\{\gamma_1(h_{m+1}),\gamma_2(h_{m+2})\}\cup\{\rho(h_{m+3}),\dots,\rho(h_k)\}.
\end{equation*}
Since $|f_1|\ge\cdots\ge|f_m|$ and $|h_1|\ge\cdots\ge|h_k|$,   it follows from Equations (1)-(4) that 
$$|\psi_1(h_1)|>\cdots>|\psi_m(h_m)|>|\gamma_1(h_{m+1})|>|\gamma_2(h_{m+2})|>|\rho(h_{m+3})|\ge\cdots\ge|\rho(h_k)|.$$
Hence, one can easily reconstruct the $k$-tuple $\tau$ from its image under $\Gamma$ (recall that all the mappings $\psi_i,\gamma_j,\rho$ are injective). In particular, the mapping $\Gamma$ is also injective. 

Note also that in the sum model we have 
$$||\Gamma(\tau)||_{\mathrm{sum}}=||\tau||_{\mathrm{sum}}+C_1,$$
where $C_1=k|p|+8+\frac{m}{2}(m+13)+\sum_{i=1}^m|f_i|
$.

Similarly, in the max model, we have 
$$||\Gamma(\tau)||_{\mathrm{max}}=||\tau||_{\mathrm{max}}+C_2,$$
where $C_2=|f_1|+|p|+m+6.$

Now, let $S=\Gamma(X_k)$. Since $C_1$ and $C_2$ are constants determined uniquely by the set of generators $\{f_1,\dots,f_m\}$, the finite binary word $u$ and the integer $k$ we have the following.

\begin{lemma}
	The asymptotic density of $S$ in $X_k$ is positive with respect to both stratifications. 
\end{lemma}

\begin{proof}
	Identical to the proof of Lemma \ref{pos_den}.
\end{proof}

Thus, the following lemma completes the proof of the theorem. 

\begin{lemma}
	Every tuple in $S$ generates $H$.
\end{lemma}

\begin{proof}
	Let $\tau=\{h_1,\dots,h_k\}\in X_k$. It suffices to prove that $$\Gamma(\tau)= 
	\{\psi_1(h_1),\dots,\psi_m(h_m)\}\cup\{\gamma_1(h_{m+1}),\gamma_2(h_{m+2})\}\cup\{\rho(h_{m+3}),\dots,\rho(h_k)\}$$
	generates $H$. Let $K$ be the subgroup of $F$ generated by $\Gamma(\tau)$. We will prove that $K=H$.
	
	First, we claim that $K$ contains $F_{[p]}$. 
	Indeed, let $\phi_1,\phi_2\colon X\to X$ be the functions defined in the proof of Theorem \ref{Thm1}. Note that 
	\begin{equation*}
	\gamma_1(h_{m+1})=\bar{x}\cdot (h_{m+1})_{[p00110]}=x_{[p]}\cdot ((h_{m+1})_{[00110]})_{[p]}=(x\cdot (h_{m+1})_{[00110]} )_{[p]}=(\phi_1(h_{m+1}))_{[p]}
	\end{equation*}
	\begin{equation*}
	\gamma_2(h_{m+2})=\bar{y}\cdot (h_{m+2})_{[p11]}=y_{[p]}\cdot ((h_{m+2})_{[11]})_{[p]}=(y\cdot (h_{m+2})_{[11]} )_{[p]}=(\phi_2(h_{m+2}))_{[p]}
	\end{equation*}
	By Lemma \ref{lem:SgenF}, the elements $\phi_1(h_{m+1}),\phi_2(h_{m+2})$ generate $F$. Hence, their $[p]$-copies, i.e., $\gamma_1(h_{m+1}), \gamma_2(h_{m+2})$ generate $F_{[p]}$. Hence $F_{[p]}$ is contained in $K$. 
	
	Next, we claim that $K\le H$. Recall that $F_{[p]}\le H$. Since $\gamma_1(h_{m+1}),\gamma_2(h_{m+2})\in F_{[p]}$ and the image of $\rho$ is contained in $F_{[p]}$ as well, it 
	suffices to prove that for each $i=1,\dots,m$, $\psi_i(h_i)\in H$. 
	Recall that $\psi_i(h_i)=f_i\cdot g_{[p_i]}$ and that $p$ is a prefix of $p_i$. Hence, $g_{[p_i]}\in F_{[p]}\subseteq H$. Since $f_i\in H$, we have that  $\psi_i(h_i)\in H$ as required. 
	
	To finish, we prove that $H\le K$. It suffices to prove that the generating set $\{f_1,\dots,f_m\}$ is contained in $K$. Let $i\in\{1,\dots,m\}$, and note that $\psi_i(h_i)=f_i\cdot g_{[p_i]}\in K$. Since $g_{[p_i]}\in F_{[p]}$ and we have proved that $F_{[p]}$ is contained in $K$, we get that $f_i\in K$ as required.  	
\end{proof}	
\end{proof}

We finish the paper with the following open problem. 

\begin{problem}
	Which subgroups of $F$ are perpetual with respect to the sum/max-model?  
\end{problem}

It would be especially interesting if there are subgroups which are perpetual with respect to one model but not the other. 

It seems likely that like abelian subgroups of $F$, all finitely generated solvable subgroups of $F$, are perpetual. It is possible that Bleak's characterization of solvable subgroups of $F$ \cite{Bl1,Bl2} (see also \cite{BBH}) can be useful here. 

In fact, we believe that the finitely generated subgroups of $F$ which are perpetual with respect to both models are exactly those which contain a natural copy of $F$. A possible candidate for a counter example is the subgroup $B$ of $F$ constructed in the proof of \cite[Theorem 9.1]{G16}. The group $B$ is an elementary amenable subgroup of $F$ (it is a copy of the Brin-Navas group \cite[Section 5]{Brin1},\cite[Example 6.3]{N}) and it is maximal inside a normal subgroup $K\triangleleft F$, such that $F$ is a cyclic extension of $K$.

\end{document}